\numberwithin{equation}{section}
\DeclareMathOperator{\supp}{supp}
\DeclareMathOperator{\propagation}{prop}
\DeclareMathOperator{\Ind}{Ind}
\DeclareMathOperator{\Spin}{Spin}
\newcommand{\beq}[1]{\begin{equation} \label{#1}}
\newcommand{\eeq}{\end{equation}}
\newcommand{\bea}{\begin{eqnarray}}
\newcommand{\eea}{\end{eqnarray}}
\begin{document}

\theoremstyle{plain}
\newtheorem{theorem}{Theorem}[section]
\newtheorem{thm}{Theorem}[section]
\newtheorem{assumption}[theorem]{Assumption}
\newtheorem{lemma}[theorem]{Lemma}
\newtheorem{proposition}[theorem]{Proposition}
\newtheorem{prop}[theorem]{Proposition}
\newtheorem{corollary}[theorem]{Corollary}
\newtheorem{conjecture}[theorem]{Conjecture}
\newtheorem{question}[theorem]{Question}

\theoremstyle{definition}
\newtheorem{convention}[theorem]{Convention}
\newtheorem{definition}[theorem]{Definition}
\newtheorem{defn}[theorem]{Definition}
\newtheorem{example}[theorem]{Example}
\newtheorem{remark}[theorem]{Remark}
\newtheorem*{remark*}{Remark}
\newtheorem*{overview*}{Overview}
\newtheorem*{results*}{Results}
\newtheorem{rem}[theorem]{Remark}

\newcommand{\C}{\mathbb{C}}
\newcommand{\R}{\mathbb{R}}
\newcommand{\Z}{\mathbb{Z}}
\newcommand{\N}{\mathbb{N}}
\newcommand{\Q}{\mathbb{Q}}

\newcommand{\Supp}{{\rm Supp}}

\newcommand{\field}[1]{\mathbb{#1}}
\newcommand{\bZ}{\field{Z}}
\newcommand{\bR}{\field{R}}
\newcommand{\bC}{\field{C}}
\newcommand{\bN}{\field{N}}
\newcommand{\bT}{\field{T}}
\newcommand{\cB}{{\mathcal{B} }}
\newcommand{\cK}{{\mathcal{K} }}
\newcommand{\cF}{{\mathcal{F} }}
\newcommand{\cO}{{\mathcal{O} }}
\newcommand{\cE}{\mathcal{E}}
\newcommand{\cS}{\mathcal{S}}
\newcommand{\cN}{\mathcal{N}}
\newcommand{\calL}{\mathcal{L}}

\newcommand{\KK}{K \! K}

\newcommand{\norm}[1]{\| #1\|}

\newcommand{\Spinc}{\Spin^c}

\newcommand{\HH}{{\mathcal{H} }}
\newcommand{\Hpi}{\HH_{\pi}}

\newcommand{\DNR}{D_{N \times \R}}


\def\kt{\mathfrak{t}}
\def\kk{\mathfrak{k}}
\def\kp{\mathfrak{p}}
\def\kg{\mathfrak{g}}
\def\kh{\mathfrak{h}}
\def\so{\mathfrak{so}}
\def\cut{c}

\newcommand{\ddt}{\left. \frac{d}{dt}\right|_{t=0}}
\newcommand{\todoa}[1]{\color{red}\textbf{TO DO$^A$: }{#1 }\color{black}}
\newcommand{\todob}[1]{\color{teal}\textbf{TO DO$^B$: }{#1 }\color{black}}
\newcommand{\todoc}[1]{\color{olive}\textbf{TO DO$^C$: }{#1 }\color{black}}
\newcommand{\todod}[1]{\color{blue}\textbf{TO DO$^C$: }{#1 }\color{black}}
\newcommand{\todoe}[1]{\color{green}\textbf{TO DO$^C$: }{#1 }\color{black}}

\newcommand{\esta}{\color{red}\textbf{estimate $1$}\color{black}}
\newcommand{\estb}{\color{teal}\textbf{estimate $2$}\color{black}}
\newcommand{\estc}{\color{olive}\textbf{estimate $3$}\color{black}}
\newcommand{\estd}{\color{blue}\textbf{estimate $4$}\color{black}}
\newcommand{\este}{\color{green}\textbf{estimate $5$}\color{black}}

\title{Quantitative K-theory, positive scalar curvature, and band width}

\author{Hao Guo}
\address[Hao Guo]{ Department of Mathematics, Texas A\&M University }
\email{haoguo@math.tamu.edu}
\author{Zhizhang Xie}
\address[Zhizhang Xie]{ Department of Mathematics, Texas A\&M University }
\email{xie@math.tamu.edu}

\author{Guoliang Yu}
\address[Guoliang Yu]{ Department of
	Mathematics, Texas A\&M University}
\email{guoliangyu@math.tamu.edu}

\thanks{H.G. is partially supported by NSF DMS-2000082. Z.X. is partially supported by NSF DMS-1800737 and NSF DMS-1952693. G.Y. is partially supported by NSF DMS-1700021, NSF DMS-2000082, and the Simons Fellows Program}

\subjclass[2010]{46L80, 58B34, 53C20}

\maketitle

\begin{abstract}
We develop two connections between the quantitative framework of operator $K$-theory for geometric $C^*$-algebras and the problem of positive scalar curvature. First, we introduce a quantitative notion of higher index and use it to give a refinement of the well-known obstruction of Rosenberg to positive scalar curvature on closed spin manifolds coming from the higher index of the Dirac operator. We show that on a manifold with uniformly positive scalar curvature, the propagation at which the index of the Dirac operator vanishes is related inversely to the curvature lower bound. Second, we give an approach, using related techniques, to Gromov's band width conjecture, which has been the subject of recent work by Zeidler and Cecchini from a different point of view.
\end{abstract}
\vspace{1cm}
\tableofcontents

\section{Introduction and motivation}
\label{sec intro}
Many questions in topology and geometry have important links to the $K$-theory of $C^*$-algebras, which in particular is the receptacle for a vast generalization of classical index theory, known as \emph{higher index theory}. For example, when an elliptic differential operator on a closed manifold is lifted to the universal cover, the \emph{higher index} of the lifted operator can be constructed by taking into account the action of the fundamental group \cite{Baum-Connes,BCH,ConnesNCG,Kasparov,WillettYu}. The higher index plays a fundamental role in the study of geometry and topology through the Novikov conjecture \cite{CM90,Kasparov,Yu1,Yu2} on homotopy invariance of higher signatures and the Gromov-Lawson conjecture \cite{Gromov-Lawson,Gromov-Lawson2} on the existence of Riemannian metrics with positive scalar curvature, while the Baum-Connes Conjecture \cite{Baum-Connes,BCH} proposes an algorithm for computing the higher index.

\emph{Quantitative $K$-theory} is a framework for computing $K$-theory by exploiting underlying geometric structures present in a $C^*$-algebra. It is a refinement of ordinary operator $K$-theory in that the latter can be realized as a certain limit of quantitative $K$-groups. This motivates one to consider quantitative $K$-theoretic refinements of various invariants and obstructions that occur in higher index theory.

The first part of this paper is concerned with generalizing the higher index to this new setting. The resulting \emph{quantitative higher index} provides a refinement of the well-known index-theoretic obstruction of Rosenberg \cite{Rosenberg1} to the existence of positive scalar curvature on spin manifolds. We prove:
\begin{theorem}
\label{thm main}
	Let $M$ be a Riemannian spin manifold with fundamental group $\Gamma$. Let $\kappa$ be the scalar curvature and $D_{\widetilde{M}}$ the lift of the Dirac operator on $M$ to its universal cover $\widetilde{M}$. Fix $0<\varepsilon<\frac{1}{20}$. There exists a constant $\omega_0$ such that for every $c>0$, if $\kappa\geq c$ uniformly on $M$ then the $\Gamma$-equivariant quantitative maximal higher index of $D_{\widetilde M}$ at scale $r$ vanishes for all $r\geq\frac{\omega_0}{\sqrt{c}}$:
	$$\Ind_{\Gamma,\textnormal{max}}^{\varepsilon,r,N}(D_{\widetilde{M}})=0\in K_*^{\varepsilon,r,N}(C^*_\textnormal{max}(\widetilde M)^{\Gamma}),$$
	for any $N\geq 7$. The constant $\omega_0$ is independent of the manifold $M$. (See section \ref{sec quantitative} for the definition of the quantitative maximal higher index.)
	\end{theorem}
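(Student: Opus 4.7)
The plan is to combine the classical Rosenberg vanishing argument with finite propagation speed for the wave equation, using a rescaled normalizing function matched to the size of the spectral gap. By the Lichnerowicz formula $D_{\widetilde M}^2 = \nabla^*\nabla + \kappa/4$, the hypothesis $\kappa \geq c$ forces $D_{\widetilde M}^2 \geq c/4$, and hence $\mathrm{spec}(D_{\widetilde M})$ is disjoint from the open interval $(-\sqrt c/2,\sqrt c/2)$.

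Next I would fix once and for all a smooth odd function $\chi_1\colon\R\to[-1,1]$ with $\chi_1(x)=\mathrm{sign}(x)$ for $|x|\geq 1/2$ and with $\chi_1-\mathrm{sign}$ compactly supported in $(-1/2,1/2)$. Rescaling, set $\chi_c(x):=\chi_1(x/\sqrt c)$. Then $1-\chi_c^2$ is supported in the spectral gap, so $\chi_c(D_{\widetilde M})$ is an exact self-adjoint symmetry: $\chi_c(D_{\widetilde M})^2 = 1$ on the nose. Using the Fourier representation of functional calculus together with the unit-speed finite propagation of $e^{i\xi D_{\widetilde M}}$ on $\widetilde M$, I approximate $\chi_c(D_{\widetilde M})$ by the truncated operator
\[
F \;:=\; \frac{1}{2\pi}\int_{-R}^{R} \hat\chi_c(\xi)\, e^{i\xi D_{\widetilde M}}\, d\xi,
\]
which has propagation at most $R$ (after splitting $\chi_c = \mathrm{sign} + (\chi_c-\mathrm{sign})$ and treating the sign term via a parallel heat-kernel or resolvent representation whose finite-propagation truncation error decays rapidly in $R\sqrt c$). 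A change of variable $\eta=\sqrt c\,\xi$ together with the Schwartz decay of the Fourier transform of $\chi_1-\mathrm{sign}$ yields
\[
\|\chi_c(D_{\widetilde M})-F\|\;\leq\;\varphi(\sqrt c\, R),
\]
where $\varphi$ is a fixed rapidly decaying function depending only on $\chi_1$. Choosing a universal $\omega_0>0$ (depending only on $\chi_1$ and on the prescribed precision) ensures this error is below any desired threshold whenever $R\geq\omega_0/\sqrt c$; in particular $\omega_0$ is independent of $M$ and $c$ since every estimate after the rescaling step depends only on the fixed $\chi_1$.

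Feeding $F$ into the construction of the quantitative higher index from section \ref{sec quantitative}, the class $\Ind^{\varepsilon,r,N}_{\Gamma,\textnormal{max}}(D)$ at scale $r=R$ is represented by an $(\varepsilon,r,N)$-projection that differs from the genuine spectral projection $(1+\chi_c(D_{\widetilde M}))/2$ by norm $O(\varphi(\sqrt c R))$. Because $\chi_c(D_{\widetilde M})^2=1$ identically, the Rosenberg argument applied to the invertible operator $D_{\widetilde M}$ produces a witnessing trivialization of the ordinary higher index inside $C^*_\textnormal{max}(\widetilde M)^\Gamma$ (a linear homotopy of projections to a trivial one, together with the usual rotation/stabilization moves). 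Transferring this witnessing data into the $(\varepsilon,r,N)$-framework—using that $F$ is close to $\chi_c(D_{\widetilde M})$ and has propagation at most $R$—yields an $(\varepsilon,r,N)$-homotopy from the representative to a trivial class, proving the theorem.

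The main obstacle is this last transfer: the Rosenberg trivialization must be realized as a concatenation of operations that preserve the $(\varepsilon,r,N)$-constraints (products, inverses, and linear homotopies in quantitative $K$-theory), and the numerical thresholds $0<\varepsilon<1/20$ and $N\geq 7$ arise precisely as the combinatorial tolerances needed for these operations in the framework of section \ref{sec quantitative}. Accordingly, $\omega_0$ must be chosen large enough to absorb all of these tolerances simultaneously. A secondary technicality is ensuring every construction is valid in the \emph{maximal} completion; this is taken care of by the fact that $e^{i\xi D_{\widetilde M}}$ is a unitary multiplier of $C^*_\textnormal{max}(\widetilde M)^\Gamma$ with propagation at most $|\xi|$, so the Bochner integral defining $F$ converges in the maximal norm on the bounded interval $[-R,R]$.
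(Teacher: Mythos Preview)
Your high-level strategy is right: Lichnerowicz gives a spectral gap of width $\sqrt{c}$, and a normalizing function rescaled to this gap produces an index representative that is approximately trivial, with propagation inversely proportional to $\sqrt{c}$. But the execution takes an unnecessary detour and leaves the very step you call ``the main obstacle'' unresolved.

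The paper's argument is considerably more direct. Rather than starting with a $\chi_1$ that equals $\mathrm{sign}$ outside $(-1/2,1/2)$ (which forces $\widehat{\chi_1}$ to be noncompactly supported, then truncating, then worrying about how to transfer the Rosenberg trivialization into the $(\varepsilon,r,N)$-framework), the paper simply fixes from the outset a normalizing function $\chi$ whose distributional Fourier transform is already supported in some $[-s,s]$, and rescales it as $\chi_t(u)=\chi(tu)$. For $t=2u_0/\sqrt{c}$ with $u_0$ chosen so that $|1-\chi(u)^2|$ is small for $|u|>u_0$, the function $\chi_t$ satisfies $\chi_t(u)^2\approx 1$ on the spectrum of $D$, and $\chi_t(D)$ automatically has propagation $\leq 2u_0 s/\sqrt{c}$. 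The explicit formula for the index representative $A_{\chi_t}(D)$ (the matrix \eqref{eq pchi} in the even case, the polynomial $g_m(S_{\chi_t})$ in the odd case) then shows directly that its norm is below the threshold $\varepsilon/(2N+1)$ (resp.\ $\|1-g_m(S_{\chi_t})\|<\varepsilon/N$), and Lemma~\ref{lem htpy} immediately gives vanishing in $K_*^{\varepsilon,r,N}$.

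The point is that your ``Rosenberg trivialization transfer'' is a red herring. You do not need to produce a homotopy of quasiidempotents tracking some abstract witnessing data; the quasiidempotent representing the quantitative index is already within $\varepsilon/(2N+1)$ of zero in norm, so Lemma~\ref{lem htpy} finishes the argument in one line. Your truncated operator $F$ would, if you checked carefully, define a normalizing function $\psi$ with $\supp\widehat{\psi}\subseteq[-R,R]$, and then $A_\psi(D)$ would be small for the same reason---but this is just the paper's argument reached by a longer path. The detour through $\chi_c$ and truncation buys nothing; it only introduces the technical issue (the $\mathrm{sign}$ part of $\widehat{\chi_c}$ is a principal-value distribution) that you gesture at but do not resolve.
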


In the second part of this paper, we prove a result on Gromov's band width conjecture (see \cite[11.12, Conjecture C]{Gromov18} and \cite[section 3.6]{GromovLectures}) using tools from quantitative $K$-theory. The conjecture is as follows.
\begin{conjecture}
\label{conj band width}
	Let $M$ be a closed manifold of dimension $n-1\geq 5$ that does not admit a Riemannian metric of positive scalar curvature. Then there exists a constant $C_n$ such that for every Riemannian manifold $V$ diffeomorphic to $M\times[-1,1]$ with scalar curvature bounded below by $\sigma>0$, we have
	\begin{equation}
	\label{eq Gromov}
	L\coloneqq\textnormal{dist}(\partial_-V,\partial_+V)\leq\frac{C_n}{\sqrt{\sigma}},
	\end{equation}
	where $\partial_\pm V=M\times\{\pm 1\}$, and $\textnormal{dist}$ is the Riemannian distance.
\end{conjecture}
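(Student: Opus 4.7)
The plan is to argue by contradiction, using the quantitative vanishing result of Theorem~\ref{thm main} against a quantitative index class built from the band. Since the hypothesis ``$M$ does not admit positive scalar curvature'' enters index theory only through a $K$-theoretic surrogate, I will assume the standard strengthening that the Rosenberg index $\Ind_\Gamma(D_{\widetilde M})\in K_*(C^*_\textnormal{max}(\Gamma))$ is non-zero, where $\Gamma=\pi_1(M)$; under this assumption, the goal is to establish \eqref{eq Gromov}.

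Assume for contradiction that $V\cong M\times[-1,1]$ carries a metric with $\kappa_V\geq\sigma$ and band width $L>C_n/\sqrt{\sigma}$, where $C_n$ will be chosen below. First I would lift to the universal cover $\widetilde V\cong\widetilde M\times[-1,1]$ and complete it to a $\Gamma$-equivariant complete Riemannian manifold $\widehat V$ by attaching cylindrical ends $\widetilde M\times(-\infty,-1]$ and $\widetilde M\times[1,\infty)$ with arbitrary $\Gamma$-invariant metrics; positive scalar curvature on the ends is not required, because the index class to be produced will live inside the original band.

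Next I would construct a quantitative higher index class $\Ind_{\Gamma,\textnormal{max}}^{\varepsilon,r,N}(D_{\widehat V})\in K_*^{\varepsilon,r,N}(C^*_\textnormal{max}(\widehat V)^\Gamma)$ at a propagation scale $r$ chosen in the window $[\omega_0/\sqrt{\sigma},\,L/4]$, which is non-empty provided $C_n\geq 4\omega_0$. The core analytic step is a quantitative product (K\"unneth-type) formula identifying this class, up to an invertible factor arising from the one-dimensional Dirac operator on $[-1,1]$, with the image of $\Ind_\Gamma(D_{\widetilde M})$ under a pullback map induced by the projection $\widehat V\to\widetilde M$. Granting this, Theorem~\ref{thm main} applied on a slightly shrunk open neighborhood of $\widetilde M\times\{0\}$ (where $\kappa\geq\sigma$ uniformly) forces the class to vanish, while the product formula combined with $\Ind_\Gamma(D_{\widetilde M})\neq 0$ forces it to be non-zero. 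This contradiction yields $L\leq C_n/\sqrt{\sigma}$.

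The main obstacle I anticipate is the quantitative K\"unneth formula: in ordinary operator $K$-theory the product with the one-dimensional Dirac factor is the suspension isomorphism, but its quantitative refinement demands careful bookkeeping of how propagation, operator norms, and $\varepsilon$-idempotence combine under external products of operators with unequal propagation scales. A secondary difficulty is ensuring that boundary effects near $\partial V$ and the arbitrary choice of metric on the cylindrical ends do not interfere with the application of Theorem~\ref{thm main}; this should reduce to finite-propagation cutoff estimates, which remain valid provided $r<L/2$ strictly, so that the relevant operators and projections are supported well inside the PSC region.
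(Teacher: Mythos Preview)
First, note that this statement is stated as a conjecture; the paper does not prove it in full generality. What it proves is Theorem~\ref{thm main 2}, under precisely the strengthened hypothesis you adopt (spin, non-vanishing maximal Rosenberg index). So the relevant comparison is between your outline and the paper's proof of Theorem~\ref{thm main 2}.

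Your strategy has a genuine gap at the step invoking Theorem~\ref{thm main}. That theorem requires $\kappa\geq c$ \emph{uniformly on the whole manifold}, so that $D$ has a global spectral gap; it says nothing when positive scalar curvature holds only on a band. Your proposed fix---applying it ``on a slightly shrunk open neighborhood of $\widetilde M\times\{0\}$''---does not work as stated: the quantitative index $\Ind_{\Gamma,\textnormal{max}}^{\varepsilon,r,N}(D_{\widehat V})$ is a global class built from $\chi(D_{\widehat V})$, and $D_{\widehat V}$ has no spectral gap because the metric on the cylindrical ends is arbitrary. Restricting attention to a subregion does not make the global functional calculus behave as though a gap were present. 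The paper addresses exactly this localization problem with Lemma~\ref{lm:fp}: if $D^2\geq c^2$ only outside a set $Z$, then $\|f(D)\varphi\|$ is controlled by $\sup_{|y|\geq c}|f(y)|$ for $\varphi$ supported farther from $Z$ than the propagation of $f(D)$. This localized spectral estimate is the replacement for Theorem~\ref{thm main} in the band setting, and it is not a consequence of it.

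Your second ingredient, the quantitative product/K\"unneth formula, is also a real gap rather than a bookkeeping detail, as you yourself acknowledge. The paper avoids it entirely. Rather than comparing $\Ind(D_{\widehat V})$ to $\Ind(D_{\widetilde M})$ via an external product, it partitions $\widetilde M\times\mathbb R=\widetilde M_+\cup_{\widetilde M_0}\widetilde M_-$ along a hypersurface $\widetilde M_0$ inside the band, applies the coarse Mayer--Vietoris boundary map $\partial_{MV}$, and uses Roe's partitioned manifold index theorem~\eqref{eq Roe} to identify $\partial_{MV}(\Ind(D_{\widetilde M\times\mathbb R}))$ with $\Ind(D_{\widetilde M_0})$, hence with $\Ind(D_{\widetilde M})$ by cobordism invariance. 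The explicit representative of the image under $\partial_{MV}$ (for instance $e^{2\pi i p_{\chi+}}$ in the even case, see~\eqref{eq representative}) is then shown, via Lemma~\ref{lm:fp} and direct norm estimates with specific normalizing functions, to lie within distance $1$ of the identity once $L$ exceeds an explicit multiple of $1/\sqrt{\sigma}$. No quantitative $K$-groups, and in particular no quantitative K\"unneth map, enter that argument.
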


This conjecture has been actively studied in recent work by Zeidler \cite{Zeidler, Zeidler2} and Cecchini \cite{CecchiniLongNeck}, who have succeeded in attaining the optimal constant $C_n=2\pi\sqrt{\frac{n-1}{n}}$ when $M$ is spin and has non-vanishing Rosenberg index.\footnote{In fact, the inequality \eqref{eq Gromov} has been shown to hold strictly with this constant -- see \cite[Corollary 1.5]{Zeidler}.} Our purpose here is to provide an alternative approach to the band width conjecture using tools from quantitative $K$-theory. We prove:

\begin{theorem}
\label{thm main 2}
Let $M$ be a closed spin manifold of dimension $n-1$ with fundamental group $\Gamma$. Let $D_{\widetilde{M}}$ be the lift of the Dirac operator on $M$ to its universal cover $\widetilde{M}$. Then there exists a constant $C$, independent of $M$, such that for every Riemannian manifold $V$ diffeomorphic to $M\times[-1,1]$ with scalar curvature bounded below by $\sigma>0$, if
	\[ L > C\sqrt{\frac{n-1}{n}} \frac{1}{\sqrt{\sigma}}, \]
	then 
	\[  \Ind_{\Gamma,\textnormal{max}}(D_{\widetilde M}) = 0 \in K_{1}(C_{\max}^\ast( \Gamma)).\]
If $n$ is even, then $C\leq 190\pi$, and if $n$ is odd, then $C\leq 328\pi$.
In the geometric setting of Conjecture \ref{conj band width}, we may take the constant $C_n$ in \eqref{eq Gromov} to be $C\sqrt{\frac{n-1}{n}}$.
\end{theorem}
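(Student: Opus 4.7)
The overall plan is to reduce Theorem \ref{thm main 2} to the quantitative vanishing in Theorem \ref{thm main} via a Callias-type construction. I would first build a complete spin manifold $\widehat V$ from $V$ by gluing semi-infinite cylindrical ends $M\times(-\infty,-1]$ and $M\times[1,+\infty)$ with any fixed product metric; the universal cover $\widetilde{\widehat V}$ carries a free cocompact action of $\Gamma$. On this space I would consider a Callias-type Dirac operator
\[
\widehat D \;=\; D_{\widetilde{\widehat V}}\otimes 1 \;+\; f\cdot c,
\]
where $c$ is an auxiliary odd Clifford generator and $f\colon\widehat V\to\R$ is a Lipschitz function that is linear on the band region $M\times[-1,1]$ with slope of order $1/L$, and that extends to be bounded and locally constant on the two cylindrical ends. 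A standard KK-theoretic suspension argument (as in the Gromov--Lawson and Cecchini--Zeidler framework) identifies the higher index of $\widehat D$ in the appropriate equivariant Roe-type $K$-group with $\Ind_{\Gamma,\textnormal{max}}(D_{\widetilde M})\in K_1(C^*_{\max}(\Gamma))$.

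The key step is then to bound the quantitative higher index of $\widehat D$ using Theorem \ref{thm main}. The Lichnerowicz formula for the Callias-perturbed operator gives, on the band region,
\[
\widehat D^2 \;\geq\; \nabla^*\nabla + \tfrac{1}{4}\kappa + f^2 - |\grad f|,
\]
so the effective scalar-curvature lower bound on $V$ is of order $\sigma - C/L^2$. When $L>C\sqrt{(n-1)/n}/\sqrt{\sigma}$ with the constants from the theorem, the conformal refinement $\tfrac{n}{n-1}\cdot\tfrac{\kappa}{4}$ of the Lichnerowicz bound, available via twisting by a suitable weight-$1/(n-1)$ spinor bundle, keeps the full curvature estimate positive. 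The quantitative vanishing argument from Theorem \ref{thm main}, adapted to $\widehat D$ rather than to an untwisted Dirac operator on a closed manifold, then forces the quantitative maximal higher index to vanish at some scale $r\leq L$. Passing through the quantitative-to-ordinary $K$-theory comparison and using the suspension identification yields $\Ind_{\Gamma,\textnormal{max}}(D_{\widetilde M})=0$.

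The main obstacle is obtaining the explicit constants $C\leq 190\pi$ for even $n$ and $C\leq 328\pi$ for odd $n$. Three contributions must be tracked simultaneously: the universal constant $\omega_0$ from Theorem \ref{thm main} governing the propagation scale at which the quantitative index vanishes; the loss from the Callias perturbation, controlled by the optimal Lipschitz profile of $f$ on $[-1,1]$ balanced against the scalar curvature bound, with the factor $\sqrt{(n-1)/n}$ entering through the conformal Lichnerowicz refinement; and, for odd $n$, the parity shift requiring either an extra $S^1$-factor or a Clifford doubling, which roughly doubles the resulting constant and accounts for the gap between the even and odd cases. Careful optimization of the Lipschitz profile of $f$ together with consistent choices of the functional-calculus cutoffs used in the definition of $K_*^{\varepsilon,r,N}$ should produce the stated bounds. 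The matching of the quantitative index with $\Ind_{\Gamma,\textnormal{max}}(D_{\widetilde M})$ in $K_1(C^*_{\max}(\Gamma))$ then follows from the naturality of the quantitative $K$-theory under the Kasparov product with the Bott class on $\R$.
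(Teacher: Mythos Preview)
Your strategy is genuinely different from the paper's, and as stated it has a real gap. You want to build a Callias operator $\widehat D$ on a completion $\widehat V$ and then invoke Theorem~\ref{thm main}. But Theorem~\ref{thm main} needs a \emph{global} spectral gap $D^2\geq c>0$; its proof goes through the functional calculus on the full spectrum of $D$. Your Lichnerowicz estimate $\widehat D^{\,2}\geq \nabla^*\nabla+\tfrac{\kappa}{4}+f^2-|\grad f|$ is only controlled on the band, and on the cylindrical ends with ``any fixed product metric'' the scalar curvature of $M$ can be arbitrarily negative while your $f$ is bounded and locally constant there. So either $\widehat D$ is not uniformly invertible and Theorem~\ref{thm main} does not apply, or you must let the endpoint values of $f$ depend on the geometry of $M$, which makes your constant $C$ depend on $M$ and contradicts the statement. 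The ``conformal refinement'' you invoke does not repair this; the $\tfrac{n}{n-1}$ factor arises from Friedrich's inequality on the band and says nothing about the ends.

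The paper avoids this problem by not reducing to Theorem~\ref{thm main} at all. It works with the ordinary Dirac operator on $\widetilde M\times\mathbb R$ (no Callias potential), uses a local spectral estimate (Lemma~\ref{lm:fp}) that only needs $D^2\geq c^2$ on the subset $A\subset\widetilde V$ away from the boundary, and combines this with the coarse Mayer--Vietoris boundary map and Roe's partitioned manifold theorem to show that the index of the Dirac operator on the separating hypersurface $\widetilde M_0$ vanishes; cobordism invariance then gives $\Ind_{\Gamma,\max}(D_{\widetilde M})=0$. The explicit constants $190\pi$ and $328\pi$ come from specific choices of normalizing functions $h_1,h_2$ with explicitly supported Fourier transforms and direct numerical evaluation of the resulting propagation and norm bounds; nothing in your outline produces numbers, and the phrase ``careful optimization \ldots\ should produce the stated bounds'' is not a proof of those inequalities.
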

\begin{remark}
\label{rem general band}
	The method of proof for Theorem \ref{thm main 2} also applies to the more general situation of a \emph{proper band} \cite{Gromov18}, i.e. a manifold with two distinguished subsets $\partial_+$ and $\partial_-$ of the boundary, each of which is a union of connected components of the boundary. Here, given the same inequality between scalar curvature and the distance between $\partial_+$ and $\partial_-$, the conclusion is that index of the Dirac operator on $\partial_-$ vanishes.
\end{remark}
\begin{remark}
Using computer assistance, Xie and Wang \cite[Appendix B]{Zhizhang} have recently improved the constant $C$ in Theorem \ref{thm main 2} to $\sim 64\pi$.
\end{remark}

\begin{overview*}
\hfill\vskip 0.05in
\noindent The paper is organized as follows. We begin in section \ref{sec prelim} by recalling some standard geometric and operator-algebraic terminology. In section \ref{sec quantitative}, we review quantitative $K$-theory and define the quantitative higher index. In section \ref{sec main theorem} we prove Theorem \ref{thm main}, which generalizes the Lichnerowicz vanishing theorem for higher indices. Finally, in section \ref{sec band width}, we provide an approach to Gromov's band width conjecture using related ideas, and prove Theorem \ref{thm main 2}.
\end{overview*}
\vspace{0.3in}
\section{Preliminaries}
\label{sec prelim}
We first fix some notation and recall the necessary operator-algebraic and geometric terminology we will need.
\vspace{0.1in}
\subsection{Notation}
\label{subsec notation}
\hfill\vskip 0.05in
	\noindent 
	For $X$ a Riemannian manifold, we write $B(X)$, $C_b(X)$, $C_0(X)$, and $C_c(X)$ to denote the $C^*$-algebras of complex-valued functions on $X$ that are, respectively: bounded Borel, bounded continuous, continuous and vanishing at infinity, and continuous with compact support. If $S\subseteq X$ is a Borel subset, we write $\mathbbm{1}_S$ for the associated characteristic function.
	
	
	For any $C^*$-algebra $A$, denote its unitization by $A^+$ and its multiplier algebra by $\mathcal{M}(A)$. We view $A$ as an ideal of $\mathcal{M}(A)$. 
	
	
	The action of a group $G$ on $X$ naturally induces a $G$-action on spaces of functions on $X$ as follows: given a function $f$ on $X$ and $g\in G$, define $g\cdot f$ by $g\cdot f(x)=f(g^{-1}x)$. More generally, for a section $s$ of a $\Gamma$-vector bundle over $X$, the section $g\cdot s$ is defined by $g\cdot s(x)=g(s(g^{-1}x))$. We say that an operator on sections of a bundle is $G$-equivariant if it commutes with the $G$-action.
	
	We will denote the maximal group $C^*$-algebra of a group $G$ by $C^*_{\textnormal{max}}(G)$.
	

\hfill\vskip 0.1in
\subsection{Geometric $C^*$-algebras and modules}
	\label{subsec op alg}
 	\hfill\vskip 0.05in
	\noindent First let us recall the general notion of a geometric $C^*$-algebra.
\begin{definition}
\label{def geometric algebra}
A $C^*$-algebra $A$ is said to be \emph{geometric} if it admits a filtration $\{A_r\}_{r>0}$ satisfying the following properties:
\begin{enumerate}[(i)]
\item $A_r\subseteq A_{r'}$ if $r\leq r'$;
\item $A_r A_{r'}\subseteq A_{r+r'}$;
\item $\bigcup_{r=0}^\infty A_r$ is dense in $A$.
\end{enumerate}
An element $a\in A_r$ is said to have \emph{propagation at most $r$}, for which we write
$$\textnormal{prop}(a)\leq r.$$
\end{definition}
If $A$ is non-unital, then its unitization $A^+$, viewed as $A\oplus\mathbb{C}$ as as a vector space, is a geometric $C^*$-algebra with filtration
$$\{A_r\oplus\mathbb{C}\}_{r>0}.$$ 
In addition, for each $n$, the matrix algebra $M_n(A)$ is a geometric $C^*$-algebra with filtration
$$\{M_n(A_r)\}_{r>0}.$$

The particular example of a geometric $C^*$-algebra that we will be interested in is the (maximal) Roe algebra associated to a geometric module, which we now review. 

For the rest of this section, let $X$ be a Riemannian manifold on which a discrete group $G$ acts properly and isometrically. 

First recall the following fact: if $H$ is a Hilbert space and $\rho\colon C_0(X)\rightarrow\mathcal{B}(H)$ is a non-degenerate $*$-representation, then $\rho$ extends uniquely to a $*$-representation $\widetilde{\rho}\colon B(X)\rightarrow\mathcal{B}(H)$ subject to the property that, for a uniformly bounded sequence in $B(X)$ converging pointwise, the corresponding sequence in $\mathcal{B}(H)$ converges in the strong topology.

\begin{definition}
\label{def XGmodule}
	An \emph{admissible $X$-$G$-module} is a separable Hilbert space $\mathcal{H}$ equipped with a non-degenerate $*$-representation $\rho\colon C_0(X)\rightarrow\mathcal{B}(\mathcal{H})$ and a unitary representation $U\colon G\rightarrow\mathcal{U}(\mathcal{H})$ such that:
	\begin{enumerate}[(i)]
		\item for all $f\in C_0(X)$ and $g\in G$, we have  $U_g\rho(f)U_g^*=\rho(g\cdot f)$;
		\item for any non-zero $f\in C_0(X)$ we have $\rho(f)\notin\mathcal{K}(\mathcal{H})$;
		\item for any finite subgroup $F$ of $ G$ and any $F$-invariant Borel subset $E\subseteq X$, there is a Hilbert space $H'$ equipped with the trivial $F$-representation such that $\widetilde{\rho}(\mathbbm{1}_E)H'\cong l^2(F)\otimes H'$ as $F$-representations, where $\widetilde{\rho}$ is defined by extending $\rho$ as above.
	\end{enumerate}
\end{definition}
	For brevity, we will omit $\rho$ from the notation when it is clear from context.
	\begin{definition}
	\label{def:suppprop}
		Let $\mathcal{H}$ be an admissible $X$-$G$-module and $T\in\mathcal{B}(\mathcal{H})$.
		\begin{itemize}
			\item The \emph{support} of $T$, denoted $\textnormal{supp}(T)$, is the complement of all $(x,y)\in X\times X$ for which there exist $f_1,f_2\in C_0(X)$ such that $f_1(x)\neq 0$, $f_2(y)\neq 0$, and
			$$f_1Tf_2=0;$$
			\item The \textit{propagation} of $T$ is the extended real number $$\textnormal{prop}(T)=\sup\{d_X(x,y)\,|\,(x,y)\in\textnormal{supp}(T)\};$$
			\item $T$ is \textit{locally compact} if $fT$ and $Tf\in\mathcal{K}(\mathcal{H})$ for all $f\in C_0(X)$;
			\item $T$ is \textit{$ G$-equivariant} if $U_g TU_g^*=T$ for all $g\in G$;
		\end{itemize}
		The \emph{$G$-equivariant algebraic Roe algebra of $X$} is the $*$-subalgebra of $\mathcal{B}(\mathcal{H})$ consisting of $ G$-equivariant, locally compact operators with finite propagation, and is denoted by $\mathbb{C}[X]^G$.
		\end{definition}
\begin{remark}
	The notation is justified by the fact that the $*$-algebra $\mathbb{C}[X]^G$ is independent of the choice of admissible $X$-$G$-module -- see \cite[Chapter 5]{WillettYu}.
\end{remark}
		\begin{definition}
			\label{def:maximalnorm}
			The \emph{maximal norm} of an operator $T\in\mathbb{C}[X]^{ G}$ is
			$$||T||_{\textnormal{max}}\coloneqq\sup_{\phi,H'}\left\{\norm{\phi(T)}_{\mathcal{B}(H')}\,|\,\phi\colon\mathbb{C}[X]^{ G}\rightarrow\mathcal{B}(H')\textnormal{ is a $*$-representation}\right\}.$$
			The \emph{maximal equivariant Roe algebra of $X$}, denoted $C^*_{\text{max}}(X)^ G$, is the completion of $\mathbb{C}[X]^ G$ in the norm $||\cdot||_{\textnormal{max}}$.
		\end{definition}
		\begin{remark}
		\label{rem maximal Roe}
		If $G$ acts on $X$ freely, properly and cocompactly, then there is a $*$-isomorphism
		$$C_{\textnormal{max}}^*(X)^G\cong C^*_{\textnormal{max}}(G)\otimes\mathcal{K},$$
		where $\mathcal{K}$ is the algebra of compact operators on a separable infinite-dimensional Hilbert space. To make sense of Definition \ref{def:maximalnorm} for more general $X$ and $G$, one first needs to establish finiteness of the quantity $\norm{\cdot}_{\textnormal{max}}$. In \cite{GXY} this was shown to be the case when $X$ has bounded geometry and the $G$-action satisfies a suitable geometric assumption, which is in particular satisfied by the geometric situations considered in this paper (see Assumption \ref{ass:condition}).
		\end{remark}
The maximal equivariant Roe algebra $C_{\textnormal{max}}^*(X)^G$ is a geometric $C^*$-algebra in the sense of Definition \ref{def geometric algebra} with respect to the filtration $\{\mathbb{C}[X]^G_r\}_{r>0}$, where
\begin{equation}
\label{eq filtration}
\mathbb{C}[X]^G_r\coloneqq\{T\in\mathbb{C}[X]^G\colon\textnormal{prop}(T)\leq r\}.
\end{equation}
\hfill\vskip 0.3in
\section{Quantitative K-theory and higher index}
\label{sec quantitative}
In this section, we review quantitative $K$-theory and define a refinement of the higher index, called the quantitative higher index. 

We begin by reviewing quantitative $K$-groups. We take the approach in \cite{Chung} using quasiidempotents and quasiinvertibles (compare \cite{Oyono2}, where quasiprojections and quasiunitaries are used). Doing this allows us to better control the propagation of index representatives, which, in the even-dimensional setting, are more naturally given by idempotents than by projections.
\subsection{Quantitative K-theory}
\label{subsec qkt} 
\begin{definition}[{\cite[Definition 2.15]{Chung}}]
\label{def quasi}
Let $A$ be a geometric $C^*$-algebra. For $0<\varepsilon<\frac{1}{20}$, $r>0$, and $N\geq 1$,
\begin{itemize}
\item an element $e\in A$ is called an \emph{$(\varepsilon,r,N)$-quasiidempotent} if
$$\norm{e^2-e}<\varepsilon,\qquad e\in A_r,\qquad\max(\norm{e},\norm{1_{A^+}-e})\leq N;$$
\item if $A$ is unital, an element $u\in A$ is called an \emph{$(\varepsilon,r,N)$-quasiinvertible} if $u\in A_r$, $\norm{u}\leq N$, and there exists $v\in A_r$ with
$$\norm{v}\leq N,\qquad\max(\norm{uv-1},\norm{vu-1})<\varepsilon.$$
The pair $(u,v)$ is called an $(\varepsilon,r,N)$\emph{-quasiinverse pair}.
\end{itemize}
\end{definition}
The quantitative $K$-groups $K_0^{\varepsilon,r,N}(A)$ and $K_1^{\varepsilon,r,N}(A)$ are defined by collecting together all quasiidempotents and quasiinvertibles over all matrix algebras, quotienting by an equivalence relation, and taking the Gr\"othendieck completion.
\begin{definition}[{\cite[subsection 3.1]{Chung}}]
\label{def quantitative K}
Let $A$ be a unital geometric $C^*$-algebra. Let $r>0$, $0<\varepsilon<\frac{1}{20}$, and $N>0$. 
\begin{enumerate}[leftmargin=0.29in]
\item Denote by $\textnormal{Idem}^{\varepsilon,r,N}(A)$ the set of $(\varepsilon,r,N)$-quasi-idempotents in $A$. For each positive  integer $n$, let
$$\textnormal{Idem}_n^{\varepsilon,r,N}(A)=\textnormal{Idem}^{\varepsilon,r,N}(M_n(A)).$$
We have inclusions $\textnormal{Idem}_n^{\varepsilon,r,N}(A)\hookrightarrow\textnormal{Idem}_{n+1}^{\varepsilon,r,N}(A)$ given by
$e\mapsto\begin{pmatrix}e&0\\0&0\end{pmatrix}.$ Set 
$$\textnormal{Idem}_\infty^{\varepsilon,r,N}(A)=\bigcup_{n=1}^\infty\textnormal{Idem}_n^{\varepsilon,r,N}(A).$$
Define an equivalence relation $\sim$ on $\textnormal{Idem}_\infty^{\varepsilon,r,N}(A)$ by $e\sim f$ if $e$ and $f$ are $(4\varepsilon,r,4N)$-homotopic in $M_\infty(A)$. Denote the equivalence class of an element $e\in\textnormal{Idem}_\infty^{\varepsilon,r,N}(A)$ by $[e]$. Define addition on $\textnormal{Idem}_\infty^{\varepsilon,r,N}(A)/\sim$ by
$$[e]+[f]=\begin{bmatrix}e&0\\0&f\end{bmatrix}.$$
With this operation, $\textnormal{Idem}_\infty^{\varepsilon,r,N}(A)/\sim$ is an abelian monoid with identity $[0]$. Let $K_0^{\varepsilon,r,N}(A)$ denote its Grothendieck completion.

\item Denote by $GL^{\varepsilon,r,N}(A)$ the set of $(\varepsilon,r,N)$-quasiinvertibles in $A$. For each positive integer $n$, let
$$GL_n^{\varepsilon,r,N}(A)=GL_n^{\varepsilon,r,N}(M_n(A)).$$
We have inclusions $GL_n^{\varepsilon,r,N}(A)\hookrightarrow GL_{n+1}^{\varepsilon,r,N}(A)$ given by
$u\mapsto\begin{pmatrix}u&0\\0&1\end{pmatrix}.$ Set 
$$GL_\infty^{\varepsilon,r,N}(A)=\bigcup_{n=1}^\infty GL_n^{\varepsilon,r,N}(A).$$
Define an equivalence relation $\sim$ on $GL_\infty^{\varepsilon,r,N}(A)$ by $e\sim f$ if $u$ and $v$ are $(4\varepsilon,2r,4N)$-homotopic in $M_\infty(A)$. Denote the equivalence class of an element $u\in GL_\infty^{\varepsilon,r,N}(A)$ by $[u]$. Define addition on $GL_\infty^{\varepsilon,r,N}(A)/\sim$ by
$$[u]+[v]=\begin{bmatrix}u&0\\0&v\end{bmatrix}.$$
With this operation, $GL_\infty^{\varepsilon,r,N}(A)/\sim$ is an abelian group with identity $[1]$.
\end{enumerate}
\end{definition}
\begin{remark}
\label{rem nonunital}	
If $A$ is a non-unital geometric $C^*$-algebra, then we have a canonical $*$-homomorphism $\pi\colon A^+\to\mathbb{C}$. Using contractivity of $\pi$, we have homomorphisms
$$\pi_*\colon K_i^{\varepsilon,r,N}(A^+)\to K_i^{\varepsilon,r,N}(\mathbb{C}),$$
where $i=0$ or $1$. Define $K_i^{\varepsilon,r,N}(A)=\ker(\pi_*)$.
\end{remark}
We have the following useful result on quasiidempotents and quasiinvertibles:
\begin{lemma}
\label{lem htpy}
Let $A$ be a geometric $C^*$-algebra. If $e$ is an $(\varepsilon,r,N)$-idempotent in $A$, and $f\in A_r$ satisfies 
$$\norm{f}\leq N,\qquad\norm{e-f}<\frac{\varepsilon-\norm{e^2-e}}{2N+1},$$ 
then $f$ is a quasiidempotent that is $(\varepsilon,r,N)$-homotopic to $e$. In particular, if
$$\norm{f}<\frac{\varepsilon}{2N+1},$$
then the class of $f$ is zero in $K_0^{\varepsilon,r,N}(A)$.

Suppose that $A$ is unital and $(u,v)$ is an $(\varepsilon,r,N)$-quasiinverse pair in $A$. If $a\in A_r$ satisfies 
$$\norm{a}\leq N,\qquad\norm{u-a}<\frac{\varepsilon-\max(\norm{uv-1},\norm{vu-1})}{N},$$ 
then $a$ is a quasiinvertible that is $(\varepsilon,r,N)$-homotopic to $u$. In particular, if
$$\norm{1-a}<\frac{\varepsilon}{N},$$
then the class of $a$ is zero in $K_1^{\varepsilon,r,N}(A)$.
\end{lemma}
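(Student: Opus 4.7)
The plan is to prove both parts by direct perturbation, using the straight-line interpolation between the old and new representatives as the homotopy. Each part reduces to a short triangle-inequality computation, and the two "in particular" assertions are obtained by specializing to $e=0$ and $(u,v)=(1,1)$ respectively.

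For the idempotent statement, set $\delta = f-e$ and expand
\[ f^2 - f = (e^2 - e) + (e-1)\delta + \delta e + \delta^2. \]
Using $\max(\|e\|,\|1-e\|)\leq N$ and the fact that $\varepsilon<1/20$ forces $\|\delta\|<1$, the triangle inequality together with the crude bound $\|\delta\|^2 \leq \|\delta\|$ gives
\[ \|f^2 - f\| \leq \|e^2 - e\| + (2N+1)\|\delta\|, \]
which is strictly less than $\varepsilon$ once the hypothesis on $\|\delta\|$ is substituted in. Applying the identical estimate with $t\delta$ in place of $\delta$ to the straight-line path $e_t = e + t\delta$, $t\in[0,1]$, yields $\|e_t^2 - e_t\| \leq \|e^2 - e\| + t(2N+1)\|\delta\| < \varepsilon$ uniformly in $t$; convexity gives $\|e_t\|\leq N$, and $e_t\in A_r$ since the filtration is stable under linear combinations. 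This provides the $(\varepsilon,r,N)$-homotopy from $e$ to $f$. Taking $e=0$ proves the "in particular" clause.

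For the quasiinvertible statement, keep the partner $v$ and estimate
\[ \|av - 1\| \leq \|(a-u)v\| + \|uv - 1\| \leq N\|a-u\| + \|uv-1\|, \]
together with the symmetric bound for $\|va-1\|$; the hypothesis on $\|u-a\|$ forces both expressions to be strictly less than $\varepsilon$, so $(a,v)$ is an $(\varepsilon,r,N)$-quasiinverse pair. For the homotopy, the straight-line path $u_t=(1-t)u+ta$ satisfies
\[ u_t v - 1 = (1-t)(uv-1) + t(av-1), \qquad v u_t - 1 = (1-t)(vu-1) + t(va-1), \]
both of norm strictly less than $\varepsilon$ by convexity, while $\|u_t\|\leq N$ and $u_t\in A_r$ are immediate. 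This is the required $(\varepsilon,r,N)$-homotopy, with the constant partner $v$. Specializing to $u=v=1$ gives the "in particular" assertion.

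No step presents a real obstacle; the content of the lemma is bookkeeping, and the numerical factor $2N+1$ (respectively $N$) in the hypothesis is engineered precisely so that the constants balance at $t=1$. The only point requiring mild care is the norm bound $\|1_{A^+}-e_t\|\leq N$ along the idempotent interpolation, since a priori $\|1-f\|$ may exceed $N$ by at most $\|\delta\|<\varepsilon$; this small excess is absorbed within the $(4\varepsilon,r,4N)$-slack of the equivalence relation defining $K_0^{\varepsilon,r,N}(A)$ in Definition \ref{def quantitative K}, so the $K$-theoretic conclusion is unaffected.
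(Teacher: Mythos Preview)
Your argument is correct and complete. The paper itself does not give a proof here at all: it simply refers the reader to \cite[Lemma 2.19]{Chung}, noting that a geometric $C^*$-algebra is a filtered Banach algebra in Chung's sense. What you have written is precisely the elementary perturbation argument that underlies that reference, so your approach is the same in substance; you have just unpacked the citation.

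One small comment on your caveat at the end. You are right that the hypothesis only controls $\|f\|$ and not $\|1-f\|$, so strictly speaking the linear path $e_t$ may overshoot the bound $\|1-e_t\|\leq N$ by at most $\|\delta\|<\varepsilon/(2N+1)$. Your resolution---absorbing this into the $(4\varepsilon,r,4N)$-slack built into the equivalence relation in Definition~\ref{def quantitative K}---is exactly the right move and suffices for every application of the lemma in the paper, all of which concern the resulting $K$-theory class rather than the literal $(\varepsilon,r,N)$-homotopy. If one wanted the homotopy statement to hold on the nose, one would need to add the symmetric hypothesis $\|1-f\|\leq N$, but this is not needed here.
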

\begin{proof}
See \cite[Lemma 2.19]{Chung}, noting that a geometric $C^*$-algebra is in particular a filtered Banach algebra in the sense of \cite{Chung}.
\end{proof}

For $i=0$ or $1$, there is a homomorphism of abelian groups
$$\kappa_i\colon K_i^{\varepsilon,r,N}(A)\to K_i(A)$$
mapping from quantitative $K$-theory to ordinary $K$-theory defined as follows (see \cite[subsection 3.3]{Chung}).

Suppose first that $A$ is unital. Let $e$ be an $(\varepsilon,r,N)$-quasiidempotent in $A$, the spectrum of $e$ is contained in the union of disjoint balls $B_{\sqrt{\varepsilon}}(0)\cup B_{\sqrt{\varepsilon}}(1)\subseteq\mathbb{C}$. Choose a function $f_0$ that is holomorphic on a neighborhood of the spectrum and such that
$$
f_0(z)\equiv
\begin{cases}
0&\textnormal{ if }z\in\overline{B}_{\sqrt{\varepsilon}}(0),\\
1&\textnormal{ if }z\in\overline{B}_{\sqrt{\varepsilon}}(1).
\end{cases}
$$
Let $\gamma$ be the contour
$$\{z\in\mathbb{C}\colon|z|=\sqrt{\varepsilon}\}\cup\{z\in\mathbb{C}\colon|z-1|=\sqrt{\varepsilon}\}.$$
Using the holomorphic functional calculus, we get an element
\begin{equation}
\label{eq f0}	
f_0(e)=\frac{1}{2\pi i}\int_{\gamma}f_0(z)(z-e)^{-1}\,dz,
\end{equation}
which one can verify is an idempotent in $A$.

More generally, if $A$ is not necessarily unital, one may apply this procedure to $(\varepsilon,r,N)$-quasiidempotents in matrix algebras over $A^+$. Further, \cite[Proposition 3.19]{Chung} shows that $[e]\mapsto[f_0(e)]$ gives a well-defined group homomorphism
$$\kappa_0\colon K_0^{\varepsilon,r,N}(A)\to K_0(A).$$
Next, observe that if $u$ is an $(\varepsilon,r,N)$-quasiinvertible, then $u$ is in fact invertible, since by Definition \ref{def quasi} there exists $v\in A_r$ such that
$\norm{v}\leq N$, $\norm{uv-1}<\varepsilon$, and $\norm{vu-1}<\varepsilon$. In particular, $uv$ and $vu$ are both invertible, hence $u$ is invertible.

From this we obtain a group homomorphism
$$\kappa_1\colon K_1^{\varepsilon,r,N}(A)\to K_1(A).$$
We will denote the direct sum of $\kappa_0$ and $\kappa_1$ by
\begin{equation}
\label{eq kappa}	
\kappa\colon K_*^{\varepsilon,r,N}(A)\to K_*(A),
\end{equation}
where $K_*^{\varepsilon,r,N}(A)\coloneqq K_0^{\varepsilon,r,N}(A)\oplus K_1^{\varepsilon,r,N}(A)$ and $K_*(A)\coloneqq K_0(A)\oplus K_1(A)$.

We note that the idempotent $f_0(e)$ constructed in \eqref{eq f0} satisfies the following estimates, which we will use later:
\begin{proposition}[{\cite[Proposition 3.18]{Chung}}] Let $e$ be an $(\varepsilon,r,N)$-quasiidempotent in $A$, and let $f_0(e)$ be as in \eqref{eq f0}. Then
\label{prop regularization}
	\begin{align}
	\label{eq hol idem general}
	\norm{f_0(e)}&<\frac{N+1}{1-2\sqrt{\varepsilon}},\nonumber\\
	\norm{f_0(e)-e}&<\frac{2(N+1)\varepsilon}{(1-\sqrt{\varepsilon})(1-2\sqrt{\varepsilon})}.
	\end{align}
\end{proposition}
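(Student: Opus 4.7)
The plan is to obtain a uniform bound on the resolvent $(w-e)^{-1}$ along the contour $\gamma$ and then extract both inequalities as contour-integral estimates from formula \eqref{eq f0}. The algebraic backbone is the identity
\[ (w-e)(w+e-1) = w(w-1) - (e^2-e), \]
which plays the role of $e^2 = e$ for a genuine idempotent. Since $|w(w-1)| \geq \sqrt{\varepsilon}(1-\sqrt{\varepsilon})$ on $\gamma$ and $\|e^2-e\| < \varepsilon$, a Neumann-series expansion of $(w(w-1) - (e^2-e))^{-1}$ gives the bound $1/[\sqrt{\varepsilon}(1-2\sqrt{\varepsilon})]$. Combined with $\|w+e-1\| \leq \sqrt{\varepsilon} + N \leq N+1$ on $\gamma$ (which uses both of the defining norm bounds $\|e\|, \|1-e\| \leq N$, according to which of the two circles one is on), this yields
\[ \|(w-e)^{-1}\| \leq \frac{N+1}{\sqrt{\varepsilon}(1-2\sqrt{\varepsilon})} \]
uniformly on $\gamma$.

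The first inequality then follows immediately: because $f_0$ vanishes on the ball around $0$, the integral in \eqref{eq f0} reduces to the circle $|w-1|=\sqrt{\varepsilon}$, which has length $2\pi\sqrt{\varepsilon}$ and on which $|f_0|=1$. The standard contour-integral estimate therefore yields $\|f_0(e)\| \leq (N+1)/(1-2\sqrt{\varepsilon})$.

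For the second inequality, the naive estimate applied to $f_0(e)-e = \frac{1}{2\pi i}\oint_\gamma (f_0(w)-w)(w-e)^{-1}\,dw$ using $|f_0(w)-w|=\sqrt{\varepsilon}$ on $\gamma$ would only produce a constant of order $\sqrt{\varepsilon}$, missing one factor of $\sqrt{\varepsilon}$ compared to the target. The trick I would use is to factor $f_0(w)-w = w(w-1)\phi(w)$, where $\phi(w) = -1/(w-1)$ on the ball around $0$ and $\phi(w) = -1/w$ on the ball around $1$, so that $|\phi(w)| \leq 1/(1-\sqrt{\varepsilon})$ on $\gamma$. Rewriting the key identity as
\[ w(w-1)(w-e)^{-1} = (w+e-1) + (e^2-e)(w-e)^{-1}, \]
I then split the integral into two pieces. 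The first, $\frac{1}{2\pi i}\oint_\gamma \phi(w)(w+e-1)\,dw$, vanishes by Cauchy's theorem: on each component circle the apparent pole of $\phi$ lies outside, so $\phi(w)(w+e-1)$ is holomorphic inside the contour. The remaining piece is $(e^2-e)\phi(e)$, and bounding $\|\phi(e)\|$ from the resolvent estimate above together with the total contour length $4\pi\sqrt{\varepsilon}$ gives $\|\phi(e)\| \leq 2(N+1)/[(1-\sqrt{\varepsilon})(1-2\sqrt{\varepsilon})]$, which combined with $\|e^2-e\| < \varepsilon$ produces the claimed bound. The main obstacle is spotting this factorization and the Cauchy-theorem cancellation; without it the estimate degrades by a factor of $\sqrt{\varepsilon}$, and everything else is routine bookkeeping.
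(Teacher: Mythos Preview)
Your argument is correct. The paper does not supply its own proof of this proposition; it merely quotes the statement from \cite[Proposition 3.18]{Chung}, so there is nothing in the present paper to compare your approach against. Your resolvent bound via the factorization $(w-e)(w+e-1)=w(w-1)-(e^2-e)$ and the subsequent Cauchy-theorem cancellation of the $\phi(w)(w+e-1)$ term are exactly the right ideas, and every numerical constant matches. One small remark: when you write $(e^2-e)\phi(e)$, note that $\phi$ is only piecewise defined on $\gamma$, but since $\phi$ extends holomorphically to a neighborhood of $\operatorname{spec}(e)\subset B_{\sqrt{\varepsilon}}(0)\cup B_{\sqrt{\varepsilon}}(1)$ (namely $-1/(w-1)$ near $0$ and $-1/w$ near $1$), the functional-calculus notation $\phi(e)$ is legitimate and equals the contour integral you estimate.
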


\vspace{0.1in}
\subsection{The quantitative higher index}
\label{subsec quantitative higher ind}
\hfill\vskip 0.05in
	\noindent The quantitative higher index we now define is a refinement of the usual higher index that retains information about the propagation of the index representative. To do this, let us first recall the definition of the usual higher index.
	
\subsubsection{Higher index}
\label{subsubsec higher ind}
\hfill\vskip 0.1in
\noindent 
A short exact sequence of $C^*$-algebras
	$$0\rightarrow I\rightarrow A\rightarrow A/I\rightarrow 0,$$
	induces a six-term exact sequence in $K$-theory:
	\[
	\begin{tikzcd}
	K_0(I) \ar{r} & K_0(A) \ar{r} & K_0(A/I) \ar{d}{\partial_1} \\
	K_1(A/I) \ar{u}{\partial_0} & K_1(A) \ar{l} & K_1(I) \ar{l},
	\end{tikzcd}
	\]
	where the connecting maps $\partial_0$ and $\partial_1$ are defined as follows.
	\begin{definition}
		\label{def connectingmaps}
		\hfill
		\begin{enumerate}[(i)]
			\item $\partial_0$: let $u$ be an invertible matrix with entires in $A/I$ representing a class in $K_1(A/I)$.
			Write
			\[w=
			\begin{pmatrix}
			0&-u^{-1}\\u&0	
			\end{pmatrix}=
			\begin{pmatrix} 
			1 & 0\\ 
			u & 1
			\end{pmatrix}
			\begin{pmatrix} 
			1 & -u^{-1}\\ 
			0 & 1
			\end{pmatrix}
			\begin{pmatrix} 
			1 & 0\\ 
			u & 1
			\end{pmatrix}.
			\]
			Then $w$ lifts to an invertible matrix $W$ with entries in $A$	. Then
			$$
			P=W\begin{pmatrix}
			1 & 0\\ 
			0 & 0
			\end{pmatrix}W^{-1}
			$$
			is an idempotent, and we define
			\begin{equation}
			\label{eq even index}
			\partial_0[u]\coloneqq 
			\left[P
			\right]-
			\begin{bmatrix}
			0 & 0\\
			0 & 1
			\end{bmatrix}
			\in K_0(I).
			\end{equation}

			\item $\partial_1$: let $q$ be an idempotent matrix with entries in $A/I$ representing a class in $K_0(A/I)$. Let $Q$ be a lift of $q$ to a matrix algebra over $A$. Then we define
			\begin{equation}
			\label{eq odd index}
			\partial_1[q]\coloneqq\left[e^{2\pi iQ}\right]\in K_1(I).
			\end{equation}
		\end{enumerate}
	\end{definition}
	
	Now let $(M,g)$ be a Riemannian manifold equipped with a proper isometric action by a group $\Gamma$. Let $D$ be a $\Gamma$-equivariant first-order essentially self-adjoint elliptic differential operator on a bundle $E\to M$. We will assume throughout that if $M$ is odd-dimensional then $D$ is an ungraded operator, while if $M$ is even-dimensional then $D$ is odd-graded with respect to a $\mathbb{Z}_2$-grading on $E$. Let $\mathcal{M}$ be the multiplier algebra of $C^*_\textnormal{max}(M)^{\Gamma}$, and let $\mathcal{Q}=\mathcal{M}/C^*_\textnormal{max}(M)^{\Gamma}.$ We have a short exact sequence of $C^*$-algebras
	$$0\rightarrow C^*_\textnormal{max}(M)^{\Gamma}\rightarrow\mathcal{M}\rightarrow\mathcal{Q}\rightarrow 0.$$ Choose a \emph{normalizing function} $\chi\colon\mathbb{R}\rightarrow\mathbb{R}$, i.e. a continuous, odd function such that 
	$$\lim_{x\rightarrow +\infty}\chi(x)=1.$$

	Using the functional calculus for the maximal Roe algebra from \cite{GXY}, we may form the bounded adjointable operator $\chi(D)$ on the Hilbert module $C^*_{\textnormal{max}}(M)^\Gamma$ over itself.
When $\dim M$ is even, we can write 
$$\chi(D)=\begin{pmatrix}0&\chi(D)^-\\\chi(D)^+&0\end{pmatrix}.$$
	It follows from \cite[Proposition 4.1]{GXY} that the class of $\chi(D)^+$ in $\mathcal{M}/C^*_\textnormal{max}(M)^{\Gamma}$ is invertible and independent of the choice of $\chi$,
	while the class of $\frac{\chi(D)+1}{2}$ is an idempotent. This leads us to the definition of the maximal higher index of $D$:
\begin{definition}
\label{def higher index}
			For $i=0,1$, let $\partial_i$ be the connecting maps from Definition \ref{def connectingmaps}. The \emph{maximal higher index} of $D$ is the element
			\begin{empheq}[left={\Ind_{\Gamma,\textnormal{max}}D\coloneqq
\empheqlbrace}]{alignat*=2}
    \partial_{0}\left[\chi(D)^+\right]\in K_{0}\big(C^*_\textnormal{max}(M)^{\Gamma}\big)\quad&\textnormal{ if $\dim M$ is even},\\[1.5ex]
    \partial_{1}\left[\tfrac{\chi(D)+1}{2}\right]\in K_{1}\big(C^*_\textnormal{max}(M)^{\Gamma}\big)\quad&\textnormal{ if $\dim M$ is odd}.
\end{empheq}
\end{definition}

We have the following explicit representatives for the index. For $\dim M$ even, let
\begin{equation}
\label{eq pchi}
p_{\chi}(D)=\begin{pmatrix}
			\left[(1-\chi(D)^2)^2\right]_{1,1} & \,\,\left[\chi(D)(1-\chi(D)^2)\right]_{1,2}\\[1ex]
			\left[\chi(D)(2-\chi(D)^2)(1-\chi(D)^2)\right]_{2,1} & \,\,\left[\chi(D)^2(2-\chi(D)^2)\right]_{2,2}
		\end{pmatrix},
\end{equation}
where the notation $[X]_{i,j}$ means the $(i,j)$-th entry of the matrix $X$. Then $p_\chi(D)$ is an idempotent matrix, and  $\Ind_{\Gamma,\textnormal{max}}D$ is represented by
	\begin{equation}
	\label{eq Aj graded}
	A_\chi(D)=p_{\chi}(D)-\begin{pmatrix}0&0\\0&1\end{pmatrix}.
	\end{equation}
For $\dim M$, $\Ind_{\Gamma,\textnormal{max}}D$ can be represented by the unitary
	\begin{equation}
	\label{eq Aj ungraded}
	A_\chi(D)=e^{\pi i(\chi+1)}(D).
	\end{equation}
\vspace{0.1in}
\subsubsection{Quantitative higher index}
\label{sec qhi}
\hfill\vskip 0.05in
\noindent The maximal equivariant Roe algebra is a geometric $C^*$-algebra with respect to the filtration \eqref{eq filtration}. This allows us to define a version of the maximal higher index that lives in quantitative $K$-theory using finite-propagation representatives. We now do this for even and odd-dimensional $M$ separately.

Throughout this subsection, fix 
	$$0<\varepsilon<\tfrac{1}{20},\quad r>0,\quad N\geq 7.$$

\subsubsection*{Even-dimensional case}
\hfill\vskip 0.05in
	\noindent Suppose $M$ is even-dimensional.
	Choose a normalizing function $\chi$ whose distributional Fourier transform satisfies 
	\begin{equation}
	\label{eq fourier r5}
	\supp\widehat{\chi}\subseteq\left[-\frac{r}{5},\frac{r}{5}\right].
	\end{equation}
	Let $A_{\chi}(D)=p_{\chi}(D)-\begin{psmallmatrix}0&0\\0&1\end{psmallmatrix}$ as in \eqref{eq Aj graded}. This is a difference of two idempotents in $M_2((C^*_\textnormal{max}(M)^{\Gamma})^+)$, each with propagation at most $r$, and observe that 
	$$\max(\norm{p_\chi(D)},\norm{1-p_\chi(D)})\leq N.$$ 
	\begin{definition}
	\label{def qhi even}	
	Suppose $\dim M$ is even, the \emph{$(\varepsilon,r,N)$-quantitative maximal higher index} of $D$ is the class
$$\Ind_{\Gamma,\textnormal{max}}^{\varepsilon,r,N}(D)=\left[p_{\chi}(D)\right]-\begin{bmatrix}0&0\\0&1\end{bmatrix}\in K_0^{\varepsilon,r,N}(C^*_\textnormal{max}(M)^{\Gamma}).$$
	\end{definition}
	\begin{remark}

\label{rem independence even}
The class $\Ind_{\Gamma,\textnormal{max}}^{\varepsilon,r,N}(D)$ is independent of the choice of normalizing function $\chi$ satisfying \eqref{eq fourier r5}. Indeed, suppose $\chi_0$ and $\chi_1$ are two normalizing functions satisfying \eqref{eq fourier r5}. By linearity of the Fourier transform, the function 
$$\chi_t\coloneqq(1-t)\chi_0+t\chi_1$$ 
also satifies \eqref{eq fourier r5} for each $0\leq t\leq 1$. Define $p_{\chi_t}(D)$ as in \eqref{eq pchi}. Then the path $t\mapsto p_{\chi_t}(D)=(1-t)p_{\chi_0}(D)+tp_{\chi_1}(D)$ is a homotopy of $(\varepsilon,r,N)$-quasi-idempotents connecting $p_{\chi_0}(D)$ and $p_{\chi_0}(D)$. From Definition \ref{def quantitative K} (1) it follows that $A_{\chi_0}(D)$ and $A_{\chi_1}(D)$ define the same class in $K_0^{\varepsilon,r,N}(C^*_\textnormal{max}(M)^{\Gamma}).$
\end{remark}
%
\hfill\vskip 0.05in
\subsubsection*{Odd-dimensional case}
\hfill\vskip 0.05in
	\noindent Suppose $M$ is odd-dimensional. 
	We first need to make the following preparation. For each integer $n\geq 0$, define the polynomial
	$$f_n(x)\coloneqq\sum_{k=0}^n\frac{(2\pi ix)^k}{k!},$$
	and note that
	$$f_n(x)=1+\left(\sum_{k=1}^n\frac{(2\pi i)^k}{k!}\right)x+\left(\sum_{k=2}^n\frac{(2\pi i)^k}{k!}\sum_{j=0}^{k-2}x^j\right)(x^2-x).$$
	Letting
	\begin{align}
	\label{eq gn}
	g_n(x)&\coloneqq f_n(x)-\left(\sum_{k=1}^n\frac{(2\pi i)^k}{k!}\right)x^2\nonumber\\
	&=1+\left(\sum_{k=1}^n\frac{(2\pi i)^k}{k!}\right)(x-x^2)+\left(\sum_{k=2}^n\frac{(2\pi i)^k}{k!}\sum_{j=0}^{k-2}x^j\right)(x^2-x),
	\end{align}
	we see that, as $n\to\infty$, the difference
	$$e^{2\pi ix}-g_n(x)=\left(\sum_{k=n+1}^\infty\frac{(2\pi i)^k}{k!}\right)(x-x^2)+\left(\sum_{k=n+1}^\infty\frac{(2\pi i)^k}{k!}\sum_{j=0}^{k-2}x^j\right)(x^2-x)$$
	converges uniformly to $0$ for $x\in[-2,2]$. 
	Let $m=m(\varepsilon,N)$ be the least natural number such that
	\begin{align}
	\label{eq gm}
	 |g_m(x)g_m(-x)-1|<\varepsilon,\nonumber\\
	 |e^{2\pi i x}-g_m(x)|<1,
	\end{align}
	for all $x\in[-2,2]$. Now let $\chi$ be a normalizing function satisfying
	\begin{equation}
	\label{eq fourier 1}
	\supp\widehat{\chi}\subseteq\left[-\frac{r}{\deg g_m},\frac{r}{\deg g_m}\right],
	\end{equation}
	as well as $\norm{\chi}_\infty\leq 2$. Then the operator 
	$$S_\chi=\frac{\chi(D)+1}{2}$$ 
	has propagation at most $\frac{r}{\deg g_m}$ and spectrum contained in $[-\frac{1}{2},\frac{3}{2}]$.
	 Letting $A_{\chi}(D)=e^{\pi i(\chi+1)}(D)$ as in \eqref{eq Aj ungraded}, and using \eqref{eq gm}, we see that
	 \begin{equation}
	 	\norm{g_m(S_\chi)g_m(-S_\chi)-1}<\varepsilon,\quad
	 	\propagation(g_m(S_\chi))\leq r,\quad
		 \norm{g_m(\pm S_\chi)}\leq N,
	 \end{equation}
and that
\begin{equation}
\label{eq <1}
\norm{A_{\chi}(D)-g_m(S_\chi)}<1.
\end{equation}
Meanwhile, since
%
%
	$$S_\chi^2-S_\chi=\frac{\chi(D)^2-1}{4}\in C^*_\textnormal{max}(M)^{\Gamma},$$ 
	it follows from \eqref{eq gn} and Definition \ref{def quasi}
%
	that $g_m(S_\chi(D))$ is an $(\varepsilon,r,N)$-quasiinvertible in $(C^*_\textnormal{max}(M)^{\Gamma})^+$, and that
	$$\Ind_{\Gamma,\textnormal{max}}(D)=[A_\chi(D)]=[g_m(S_\chi)]\in K_1(C^*_\textnormal{max}(M)^{\Gamma}),$$
	where the second equality follows from \eqref{eq <1}. 
\begin{definition}
	\label{def qhi odd}
	For $\dim M$ odd, the \emph{$(\varepsilon,r,N)$-quantitative maximal higher index} of $D$ is the class
	$$\Ind_{\Gamma,\textnormal{max}}^{\varepsilon,r,N}(D)=[g_m(S_\chi)]\in K_1^{\varepsilon,r,N}(C^*_\textnormal{max}(M)^{\Gamma}),$$
	where $g_m = g_{m(\varepsilon,N)}$ is the polynomial defined above.
\end{definition}

\begin{remark}
\label{rem independence odd}
	The class $\Ind_{\Gamma,\textnormal{max}}^{\varepsilon,r,N}(D)$ is independent of the choice of normalizing function $\chi$ above. To see this, suppose $\chi_0$ and $\chi_1$ are two normalizing functions satisfying \eqref{eq fourier 1}, and let $S_{\chi_j}=\frac{\chi_j(D)+1}{2}$, $j=0,1$. The homotopy from Remark \ref{rem independence even} then induces a natural homotopy of $(\varepsilon,r,N)$-quasiinvertibles between $g_m(S_{\chi_0})$ and $g_m(S_{\chi_1})$. It follows from Definition \ref{def quantitative K} (2) that $g_m(S_{\chi_0})$ and $g_m(S_{\chi_1})$ define the same class in $K_1^{\varepsilon,r,N}(C^*_\textnormal{max}(M)^{\Gamma})$.
\end{remark}
	
\begin{remark}
If the parameters $\varepsilon$ and $N$ are clear from context, we may simply refer to $\Ind_{\Gamma,\textnormal{max}}^{\varepsilon,r,N}(D)$ as the \emph{quantitative maximal higher index of $D$ at scale $r$}.
\end{remark}

The usual maximal higher index of $D$ factors through its quantitative refinement:
$$
\Ind_{\Gamma,\textnormal{max}}(D)=\kappa\circ\Ind_{\Gamma,\textnormal{max}}(D),
$$
where $\kappa$ was defined in \eqref{eq kappa}.



\hfill\vskip 0.3in
\section{Proof of Theorem \ref{thm main}}
\label{sec main theorem}
\begin{proof}
Suppose the scalar curvature $\kappa$ is uniformly bounded below by some $c>0$. The Lichnerowicz formula \cite{Lichnerowicz} implies that
	$$D^2=\nabla^*\nabla+\frac{\kappa}{4}\geq\frac{c}{4},$$
	where $\nabla$ is the connection on the spinor bundle lifted from the Levi-Civita connection on $M$. It follows that $(-\frac{\sqrt{c}}{2},\frac{\sqrt{c}}{2})$ is a gap in the spectrum of $D$.
	
	Let $\chi$ be a normalizing function whose distributional Fourier transform $\widehat{\chi}$ is supported on some finite interval $[-s,s]$ for $s>0$. 
For each $t>0$, let $\chi_t$ be the normalizing function defined by
\begin{equation}
\label{eq chit}
	\chi_t(u)=\chi(tu),
\end{equation}
	$u\in\mathbb{R}$. 
Let $A_{\chi}(D)$ be the index representative defined using $\chi$. 

Consider first the case when $M$ is even-dimensional, where $A_{\chi}(D)$ is an idempotent. Denote by 
	\begin{equation*}
	A_{\chi}(u)\coloneqq
	\begin{pmatrix}
			(1-\chi(u)^2)^2 & \,\,\chi(t)(1-\chi(u)^2)\\[1ex]
			\chi(u)(2-\chi(u)^2)(1-\chi(u)^2) & \,\,\chi(u)^2(2-\chi(u)^2)-1
	\end{pmatrix},\quad u\in\mathbb{R}
	\end{equation*}
	the associated matrix of functions in $M_2(C_b(\mathbb{R}))$.
	Let $u_0>0$ and a function $\alpha$ be such that
	\begin{equation}
	\label{eq AChi small}
	\norm{A_{\chi}(u)}<\frac{\varepsilon}{2N+1}
	\end{equation}
	whenever
	$$|1-\chi(u)^2|<\alpha(\varepsilon)$$
	for all $u$ such that $|u|>u_0$, where the norm of $A_{\chi}(u)$ is taken in $M_2(\mathbb{C})$.
	Note that for $N\geq 7$, \eqref{eq AChi small} also implies that $\norm{A_{\chi}^2(u)-A_{\chi}(u)}<\varepsilon$ if $|u|>u_0$.
	By \eqref{eq chit}, we have
	\begin{equation}
	\label{eq epsilonbeta}
	\Big|1-\chi_{\frac{2u_0}{\sqrt{c}}}(u)^2\Big|=\Big|1-\chi\left(\tfrac{2u_0 u}{\sqrt{c}}\right)^2\Big|<\alpha(\varepsilon)
	\end{equation}
	whenever $u\in\mathbb{R}\backslash(-\frac{\sqrt{c}}{2},\frac{\sqrt{c}}{2})$, while 
	$$\supp\Big(\widehat{\chi}_{\tfrac{2u_0}{\sqrt{c}}}(D)\Big)\subseteq\left[-\tfrac{2u_0}{\sqrt{c}}s,\tfrac{2u_0}{\sqrt{c}}s\right].$$
	It follows that $A_{\chi_{\frac{2u_0}{\sqrt{c}}}}(D)$ is an $(\varepsilon,\frac{10u_0}{\sqrt{c}}s,N)$-quasiidempotent in $M_2((C^*_{\textnormal{max}}(M)^\Gamma)^+)$ 
	with norm strictly less than $\frac{\varepsilon}{2N+1}$.
By Lemma \ref{lem htpy},
	\begin{align*}
	\Ind_{\Gamma,\textnormal{max}}^{\varepsilon,\frac{10u_0}{\sqrt{c}}s,N}(D)=
	0\in K_0^{\varepsilon,\frac{10u_0}{\sqrt{c}}s,N}(C^*_\textnormal{max}(M)^{\Gamma}).
	\end{align*}

Letting $\omega_0=10u_0 s$, we obtain $\Ind_{\Gamma,\textnormal{max}}^{\varepsilon,\frac{\omega_0}{\sqrt{c}},N}(D)=0$. For any $r\geq\frac{\omega_0}{\sqrt{c}}$, $\Ind_{\Gamma,\textnormal{max}}^{\varepsilon,r,N}(D)$ can also be represented by $A_{\chi_{\frac{2u_0}{\sqrt{c}}}}(D)$, by Remark \ref{rem independence even}. The natural homomorphism
$$K_0^{\varepsilon,\frac{\omega_0}{\sqrt{c}},N}(C^*_\textnormal{max}(M)^{\Gamma})\to K_0^{\varepsilon,r,N}(C^*_\textnormal{max}(M)^{\Gamma})$$
induced by the inclusion
$$\textnormal{Idem}_\infty^{\varepsilon,\frac{\omega_0}{\sqrt{c}},N}((C^*_\textnormal{max}(M)^{\Gamma})^+)\hookrightarrow \textnormal{Idem}_\infty^{\varepsilon,r,N}((C^*_\textnormal{max}(M)^{\Gamma})^+)$$
takes $\Ind_{\Gamma,\textnormal{max}}^{\varepsilon,\frac{\omega_0}{\sqrt{c}},N}(D)$ to $\Ind_{\Gamma,\textnormal{max}}^{\varepsilon,r,N}(D)$, whence $\Ind_{\Gamma,\textnormal{max}}^{\varepsilon,r,N}(D)=0.$

If $M$ is odd-dimensional, let $m=m(\varepsilon,N)$ and the polynomial $g_m$ be as \eqref{eq gm}. Let $\chi$ be a normalizing function satisfying \eqref{eq fourier 1}, and let
$$s=\frac{r}{\deg g_m}.$$
	Let $u_0>0$ be such that
	$$\norm{1-g_m(P_{\chi}(u))}<\frac{\varepsilon}{N}$$
	whenever $|1-\chi(u)^2|<\alpha(\varepsilon)$ holds for all $u$ such that $|u|>u_0$ or, equivalently, whenever
	\begin{equation}
	\label{eq epsilonbeta}
	\Big|1-\chi_{\frac{2u_0}{\sqrt{c}}}(u)^2\Big|=\Big|1-\chi\left(\tfrac{2u_0 u}{\sqrt{c}}\right)^2\Big|<\alpha(\varepsilon)
	\end{equation}
	for all $u\in\mathbb{R}\backslash(-\frac{\sqrt{c}}{2},\frac{\sqrt{c}}{2})$. Meanwhile, we have
	$$\supp\Big(\widehat{\chi}_{\tfrac{2u_0}{\sqrt{c}}}(D)\Big)\subseteq\left[-\tfrac{2u_0}{\sqrt{c}}s,\tfrac{2u_0}{\sqrt{c}}s\right].$$
	It follows that $g_m(P_{\chi_{\frac{2u_0}{\sqrt{c}}}}(D))$ 
	is an $(\varepsilon,\frac{2mu_0}{\sqrt{c}}s,N)$-quasiinvertible in $M_2((C^*_{\textnormal{max}}(M)^\Gamma)^+)$ satisfying	
\begin{equation}
\Big\|1-g_m(P_{\chi_{\frac{2u_0}{\sqrt{c}}}}(D))\Big\|<\frac{\varepsilon}{N}.
\end{equation}
By Lemma \ref{lem htpy},
	\begin{align*}
	\Ind_{\Gamma,\textnormal{max}}^{\varepsilon,\frac{2mu_0}{\sqrt{c}}s,N}(D)=
	0\in K_1^{\varepsilon,\frac{2mu_0}{\sqrt{c}}s,N}(C^*_\textnormal{max}(M)^{\Gamma}).
	\end{align*}
Letting $\omega_0=2mu_0 s$, we obtain $\Ind_{\Gamma,\textnormal{max}}^{\varepsilon,\frac{\omega_0}{\sqrt{c}},N}(D)=0$. For any $r\geq\frac{\omega_0}{\sqrt{c}}$, the element $\Ind_{\Gamma,\textnormal{max}}^{\varepsilon,r,N}(D)$ can also be represented by $g_m(P_{\chi_{\frac{2u_0}{\sqrt{c}}}}(D))$, by Remark \ref{rem independence even}. The natural homomorphism
$$K_1^{\varepsilon,\frac{\omega_0}{\sqrt{c}},N}(C^*_\textnormal{max}(M)^{\Gamma})\to K_1^{\varepsilon,r,N}(C^*_\textnormal{max}(M)^{\Gamma})$$
induced by the inclusion
$$GL_\infty^{\varepsilon,\frac{\omega_0}{\sqrt{c}},N}((C^*_\textnormal{max}(M)^{\Gamma})^+)\hookrightarrow GL_\infty^{\varepsilon,r,N}((C^*_\textnormal{max}(M)^{\Gamma})^+)$$
takes $\Ind_{\Gamma,\textnormal{max}}^{\varepsilon,\frac{\omega_0}{\sqrt{c}},N}(D)$ to $\Ind_{\Gamma,\textnormal{max}}^{\varepsilon,r,N}(D)$. It follows that $\Ind_{\Gamma,\textnormal{max}}^{\varepsilon,r,N}(D)=0.$
\end{proof}
\hfill\vskip 0.3in
\section{Gromov's band width conjecture}
\label{sec band width}
In this section we turn to Theorem \ref{thm main 2}. 
\subsection{Preliminaries}
%
%
To begin, let us fix the convention for the Fourier transform: 
\[  \widehat f(\xi) = \int_{\mathbb R} f(t) e^{-it\xi}dt. \]

We will need the following estimate for Dirac-type operators in the setting of the equivariant maximal Roe algebra. Let $W$ be a Riemannian manifold with bounded geometry and $\Gamma$ a finitely generated discrete group acting properly and freely on $W$. Suppose that the following assumption is satisfied:
\begin{assumption}
	\label{ass:condition}
	There exists a fundamental domain $F$ for the $\Gamma$-action on $W$ such that
	\begin{equation*}
	l(g)\rightarrow\infty\implies d(F, gF)\rightarrow\infty,
	\end{equation*}
	where $l\colon\Gamma\rightarrow\mathbb{N}$ is a fixed length function and $d$ is the Riemannian distance on $W$.
\end{assumption}
It follows from \cite[Proposition 2.14]{GXY} that the maximal $\Gamma$-equivariant Roe algebra of $W$ is well-defined for any admissible $W$-$\Gamma$-module.

\begin{lemma}\label{lm:fp}
	Let $D$ be a $\Gamma$-invariant Dirac operator acting on a bundle $E\to W$. Suppose that
	\[  D^2 \geq c^2, \]
	for some $c\geq 0$ outside a subset $Z\subset W$. For any $\varepsilon>0$, write
	\[  U_\delta  = \{ x\in W \colon d(x, Z)>\varepsilon\}. \]
\begin{enumerate}[(i)]
\item Let $f \in \mathcal S(\mathbb R)$ be an even function such that $\widehat f$ is supported in $(-r, r)$. Then for any $\delta >0$ and $\varphi\in C_0(U_{r+\delta})$, we have 
	\[ \|f(D)\varphi \|_{\mathcal{M}} \leq \|\varphi\|_{\mathcal{M}} \sup\{ |f(y)|\colon |y| \geq c\}, \]
	where the norms on both sides are taken in the multiplier algebra $\mathcal{M}(C^*_{\textnormal{max}}(W)^\Gamma)$. The same estimate holds for $\|\varphi f(D)\|_{\mathcal{M}}$.  Moreover,
	$$\|f(D)\mathbbm{1}_{U_{r+\delta}} \|_{\mathcal{M}} \leq \sup\{ |f(y)|\colon |y| \geq c\},$$
	and the same holds for $\|\mathbbm{1}_{U_{r+\delta}}f(D)\|_{\mathcal{M}}$.
	
\item Let $f \in \mathcal S(\mathbb R)$ such that $\widehat f$ is supported in $(-r, r)$. Then for any $\delta >0$ and $\varphi\in C_0(U_{2r+\delta})$, we have 
	\[ \|f(D)\varphi \|_{\mathcal{M}} \leq 2\|\varphi\|_{\mathcal{M}} \sup\{ |f(y)|\colon |y| \geq c\}, \]
	and the same holds for $\|\varphi f(D)\|_{\mathcal{M}}$. Moreover,
	$$\|f(D)\mathbbm{1}_{U_{2r+\delta}} \|_{\mathcal{M}} \leq 2\cdot\sup\{ |f(y)|\colon |y| \geq c\},$$
	and the same holds for $\|\mathbbm{1}_{U_{2r+\delta}}f(D)\|_{\mathcal{M}}$.
\end{enumerate}
\end{lemma}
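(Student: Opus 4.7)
The plan is to combine finite propagation speed for the wave operator $e^{itD}$, available in the multiplier algebra $\mathcal{M}(C^*_\textnormal{max}(W)^\Gamma)$ via the functional calculus developed in \cite{GXY}, with a Duhamel-type deformation argument exploiting the spectral hypothesis $D^2\geq c^2$ on $W\setminus Z$.

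First I would use Fourier inversion to write
\[  f(D)=\frac{1}{2\pi}\int_{-r}^{r}\widehat f(t)\,e^{itD}\,dt \]
in $\mathcal{M}(C^*_\textnormal{max}(W)^\Gamma)$, and note that $e^{itD}$ has propagation at most $|t|$. Hence for $\varphi\in C_0(U_{r+\delta})$ and $|t|\leq r$, the element $e^{itD}\varphi$ has range supported within the closed $r$-neighborhood of $U_{r+\delta}$, which is contained in $U_\delta\subseteq W\setminus Z$, where $D^2\geq c^2$. Thus the wave evolution of anything in the range of $\varphi$ is confined to the region where the spectral lower bound holds.

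Next I would construct an auxiliary $\Gamma$-equivariant self-adjoint operator $D_0$ satisfying $D_0^2\geq c^2$ globally and $D_0=D$ on $W\setminus Z$, by a standard Callias-type bundle perturbation supported in $Z$ (after passing to a doubled bundle if needed to accommodate anticommuting mass terms). Since the perturbation $A:=D_0-D$ vanishes on $U_\delta$ while $e^{isD}\varphi$ is supported in $U_\delta$ for $|s|\leq r$, the Duhamel source $A\cdot e^{isD}\varphi$ vanishes identically, and uniqueness for the wave equation gives $e^{itD_0}\varphi=e^{itD}\varphi$ for $|t|\leq r$. Integrating against $\widehat f(t)$ yields $f(D)\varphi=f(D_0)\varphi$ in $\mathcal{M}$, and the spectral inclusion $\sigma(D_0)\subseteq\{y:|y|\geq c\}$ combined with the spectral theorem produces
\[  \|f(D)\varphi\|_{\mathcal{M}}=\|f(D_0)\varphi\|_{\mathcal{M}}\leq\bigl(\sup_{|y|\geq c}|f(y)|\bigr)\|\varphi\|_{\mathcal{M}}, \]
which proves part (i). The bound on $\|\varphi f(D)\|_{\mathcal{M}}$ follows by taking adjoints (here evenness of $f$ ensures that $\overline f$ satisfies the same Fourier-support hypothesis), and the statements involving $\mathbbm{1}_{U_{r+\delta}}$ follow by monotone approximation of the indicator by continuous functions inside $\mathcal{M}$.

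Part (ii) I would then obtain from part (i) via the $C^*$-identity $\|f(D)\varphi\|^2_{\mathcal{M}}=\|\varphi^*\,|f|^2(D)\,\varphi\|_{\mathcal{M}}$: the function $|f|^2$ has Fourier transform supported in $(-2r,2r)$, so applying the strategy of part (i) to $|f|^2$ (after symmetrizing to an even function of the same Fourier support) forces the enlarged margin $2r+\delta$, and the factor of two in the final bound emerges from the symmetrization or, equivalently, from splitting $f=f_e+f_o$ into even and odd parts and summing two applications of the even-case estimate. The main obstacle throughout is executing the Duhamel/deformation identity at the level of the multiplier algebra $\mathcal{M}(C^*_\textnormal{max}(W)^\Gamma)$ rather than in a fixed Hilbert-space representation: one needs the perturbation $A$ to be bounded, $\Gamma$-equivariant, and compatible with the geometric filtration, and the wave-operator functional calculus of \cite{GXY} must be invoked at each step to convert the abstract spectral bound on $D_0$ into a norm estimate for $f(D)\varphi$ as an element of $\mathcal{M}$.
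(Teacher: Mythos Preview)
Your reduction of part (ii) to part (i) via the $C^*$-identity and the even/odd splitting of $f$ is exactly what the paper does, so that half is fine.

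For part (i), however, your route diverges from the paper's and carries a real gap. The paper does \emph{not} construct any global first-order operator $D_0$ agreeing with $D$ off $Z$. Instead it exploits the evenness of $f$ directly: since $f$ is even, the Fourier representation uses only $\cos(tD)$, which depends on $D^2$ rather than on $D$. The paper then takes the Friedrichs extension $E$ of the restriction $D^2|_{U_\delta}$ (as a regular self-adjoint operator on the Hilbert module $C^*_{\max}(W)^\Gamma$, citing \cite[Appendix~A]{Zhizhang}); this extension automatically inherits $E\geq c^2$, and finite propagation gives $\cos(tD)\varphi=\cos(t\sqrt{E})\varphi$ for $|t|\leq r$ and $\varphi\in C_0(U_{r+\delta})$. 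Hence $f(D)\varphi=f(\sqrt{E})\varphi$, and the spectral bound on $E$ finishes. No perturbation of $D$ is needed at all.

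Your Callias-type construction is where the trouble lies. To get $D_0^2\geq c^2$ by adding a zeroth-order term you need a bounded, $\Gamma$-invariant, self-adjoint bundle endomorphism anticommuting with the Clifford action; such an element is not available on an arbitrary Dirac bundle, which is why you are forced to contemplate doubling. But once you pass to $E\oplus E$ (or some twisted variant), your proposed identity $D_0=D$ on $W\setminus Z$ no longer makes literal sense, and the equality $e^{itD_0}\varphi=e^{itD}\varphi$ must be reinterpreted through an embedding that you have not specified. You flag this yourself as ``the main obstacle,'' and indeed it is not merely a technicality to be dispatched by citing \cite{GXY}: the doubling changes the module on which everything acts, and the Duhamel comparison has to be set up so that the original $f(D)\varphi$ is recovered as a corner. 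The paper's Friedrichs-extension argument sidesteps all of this by never leaving $D^2$, at the modest cost of requiring $f$ even in part (i) --- which is precisely the hypothesis given.
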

\begin{proof}The argument is analogous to that used in \cite[Lemmas 2.5, 2.6]{RoePSC}.
	
	We begin by proving (i). By \cite{GXY}, we can view $D$ and $D^2$ as an unbounded symmetric operators on the Hilbert module $C^*_{\textnormal{max}}(W)^\Gamma$ over itself, with initial domain a space of smooth $\Gamma$-invariant kernels. By an analogue of Friedrich's extension theorem \cite[Appendix A]{Zhizhang}, one can extend $D^2|_{U_\delta}$ to an essentially self-adjoint and regular operator $E$ on the same Hilbert module such that
	$$E\geq c^2.$$
	A standard finite propagation argument implies that 
	\begin{equation}
	\label{eq wave}
 	\cos(tD)\varphi =  \cos(t\sqrt{E})\varphi\\
	\end{equation}
	for all $0\leq t\leq r$ and $\varphi\in C_0(U_{r+\delta})$, as an equality in $\mathcal{M}(C^*_{\textnormal{max}}(W)^\Gamma)$. Since $\widehat f$ is supported in $(-r, r)$, we have 
	\[ f(D) =  \frac{1}{2\pi} \int_{\mathbb R} \widehat f(t) \cos(tD)\,dt = \frac{1}{2\pi} \int_{-r}^r \widehat f(t) \cos(tD)\,dt,\]
	which implies that
	\[ f(D)\varphi  = f(\sqrt{E}) \varphi\]
	for all $\varphi\in C_0(U_{r+\delta})$. It now follows from standard facts about the functional calculus that the norm of $f(\sqrt{E})$ is bounded above by
\begin{align*}	
  \sup\{ |f(y)| \,\colon |y|\geq c\}.
\end{align*}
Hence we have
\begin{equation*}
	 \|f(D)\varphi \|_{\mathcal{M}} \leq \|\varphi\|_{\mathcal{M}} \sup\{ |f(y)|\colon |y| \geq c\}.
\end{equation*}
	
	This continues to hold if we replace $\varphi$ by a bounded Borel function $\psi$ on $M$ with essential support in $U_{r+\delta}$, and that we have $\norm{\psi}\leq\norm{\psi}_\infty$. In particular, taking $\psi=\mathbbm{1}_{U_{r+\delta}}$ gives
\begin{align*}
	 \|f(D)\mathbbm{1}_{U_{r+\delta}} \|_{\mathcal{M}} &\leq \sup\{ |f(y)|\colon |y| \geq c\}
\end{align*}

Let us now prove (ii). Suppose first that $f$ is odd. By the $C^*$-identity, we have
$$\|f(D)\varphi \|_{\mathcal{M}}^2\leq\|\bar\varphi\|_{\mathcal{M}}\cdot\||f|^2(D)\varphi \|_{\mathcal{M}}.$$
Now applying part (i) to the function $|f|^2$ shows that for any $\varphi\in C_0(U_{2r+\delta})$, we have
\begin{equation*}
	 \||f|^2(D)\varphi \|_{\mathcal{M}} \leq \|\varphi\|_{\mathcal{M}} \sup\{ |f(y)|^2\colon |y| \geq c\},
\end{equation*}
which implies that
\begin{equation*}
	 \|f(D)\varphi \|_{\mathcal{M}} \leq \|\varphi\|_{\mathcal{M}} \sup\{ |f(y)|\colon |y| \geq c\}.
\end{equation*}
Similar to part (i), we also have
\begin{align*}
	 \|f(D)\mathbbm{1}_{U_{2r+\delta}} \|_{\mathcal{M}} &\leq \sup\{ |f(y)|\colon |y| \geq c\}.
\end{align*}
Combining the estimates for even and odd functions yields the stated results for general functions $f$. 
\end{proof}
%
%

In later arguments, we will use approximations of the sign function
\begin{equation}
\label{eq special chi}
\textnormal{sgn}(t) = \begin{cases}
1 & \textup{ if } t\geq 0,\\
-1 & \textup{ if } t<0
\end{cases}
\end{equation}
with compactly supported Fourier transform. It will be convenient to work with the following two specific functions:
\begin{align}
\label{eq h1}
h_{\text{1}}(t) = \frac{315}{151}\int_\mathbb{R}\operatorname{sinc}(\pi(t-s))^8\cdot\textnormal{sgn}(s)\,ds,
\end{align}
\begin{align}
\label{eq h2}
h_{\text{2}}(t) = \frac{4}{3}\int_\mathbb{R}\operatorname{sinc}(\pi(t-s))^3\cdot\textnormal{sgn}(s)\,ds,
\end{align}
where
$$\operatorname{sinc}(t)\coloneqq\frac{\sin(t)}{t}.$$
Note that $h_1$ and $h_2$ are normalizing functions satisfying
\begin{itemize}
\item $\supp\widehat{h}_1\subseteq[-8\pi,8\pi];$
\item $\supp\widehat{h}_2\subseteq[-3\pi,3\pi].$
\end{itemize}

\vspace{0.1in}
\subsection{Quantitative vanishing of the index}
We now proceed to the proof of Theorem \ref{thm main 2}. Let $M$ be a closed spin manifold of odd dimension $n-1$ with Dirac operator $D_M$ and fundamental group $\Gamma$. Let $\widetilde{M}$ be its universal cover and $D_{\widetilde{M}}$ be the lift of $D_M$ to $\widetilde M$. Unless indicated otherwise, all norms will be taken in $C^*_{\textnormal{max}}(M)^\Gamma$, $(C^*_{\textnormal{max}}(M)^\Gamma)^+$, or an associated matrix algebra.

As in the hypothesis of Theorem \ref{thm main 2}, assume that the maximal higher index of $D_{\widetilde{M}}$ does not vanish:
$$\Ind_{\Gamma,\textnormal{max}} D_{\widetilde{M}}\neq 0\in K_{1}(C^*_{\textnormal{max}}(\Gamma)).$$
 Suppose $V=M\times[-1,1]$ is equipped with a Riemannian metric $g_V$. Then $V$ is an instance of a \emph{Riemannian band}, in the sense of \cite[Section 2]{Gromov18}. Lift $g_V$ to a $\Gamma$-invariant metric $g_{\widetilde{V}}$ on the universal cover $\widetilde{V}=\widetilde{M}\times[-1,1]$. These metrics induce distance functions $d_V$ and $d_{\widetilde{V}}$ on $V$ and $\widetilde{V}$ respectively. Define the \emph{width} of $V$ to be
 $$L\coloneqq d_V(\partial_-,\partial_+)=d_{\widetilde{V}}(\widetilde{\partial}_-,\widetilde{\partial}_+),$$
 where $\partial_{\pm}$ and $\widetilde{\partial}_{\pm}$ denote the respective boundary components $M\times\{\pm 1\}$ and $\widetilde{M}\times\{\pm 1\}$ of $V$ and $\widetilde{V}$.
 
Suppose that the scalar curvature function $\kappa_V$ of $g_V$ satisfies
\[ \kappa_V \geq \sigma \]
uniformly over $V$ for some constant $\sigma>0$, so that the scalar curvature $\kappa_{\widetilde{V}}$ of $g_{\widetilde{V}}$ satisfies the same inequality.


\begin{proof}[Proof of Theorem \ref{thm main 2}]
Pick a $\Gamma$-invariant Riemannian metric $g$ on $\widetilde M\times\mathbb{R}$ that restricts to $g_{\widetilde{V}}$ on $\widetilde{V}$. We may assume that $g$ satisfies Assumption \ref{ass:condition}, for example by letting it be a product metric outside of a cocompact set containing $\widetilde{V}$. Let $\mathcal{S}_{\widetilde{M}\times\mathbb{R}}\to\widetilde{M}\times\mathbb{R}$ denote the spinor bundle. As mentioned in Remark \ref{rem maximal Roe} that, by \cite[Proposition 2.14]{GXY}, the equivariant maximal Roe algebra $C_{\max}^\ast(\widetilde M\times \mathbb R)^\Gamma$ on the admissible $(\widetilde M\times\mathbb{R})$-$\Gamma$-module $L^2(\mathcal{S}_{\widetilde M\times\mathbb{R}})$ is well-defined. 

Let $d$ be the Riemannian distance function associated to $g$.
Define
$$d_{\widetilde{\partial}_-}\colon\widetilde{M}\times\mathbb{R}\to[0,L],\qquad x\mapsto 
\begin{cases}
d(x,\widetilde{\partial}_-) & \textnormal{if }x\in M\times[-1,\infty),\\
-d(x,\widetilde{\partial}_-) & \textnormal{if }x\in M\times(-\infty,-1) 	
 \end{cases}
$$
 be the signed distance function to the boundary component $\widetilde{\partial}_-$ of $\widetilde{V}$. After possibly smoothing $d_{\widetilde{\partial}_-}$, we may assume that $\frac{L}{2}$ is a regular value. Now let
\begin{align*}
\widetilde{M}_+&\coloneqq d_{\widetilde{\partial}_-}^{-1}[\tfrac{L}{2},\infty),\\
\widetilde{M}_-&\coloneqq d_{\widetilde{\partial}_-}^{-1}(-\infty,\tfrac{L}{2}],
 \end{align*}
$$\widetilde{M}_0\coloneqq \widetilde{M}_+\cap\widetilde{M}_-=d_{\widetilde{\partial}_-}^{-1}(\tfrac{L}{2}),$$ 
where we note that the $\Gamma$-cocompact hypersurface $\widetilde{M}_0$ is $\Gamma$-equivariantly spin and is bordant to $\widetilde{M}$. We have a decomposition 
\begin{equation*}
	\widetilde{M}\times \mathbb R = \widetilde{M}_+ \cup_{\widetilde{M}_0} \widetilde{M}_-.
\end{equation*}
Since the characteristic functions $\mathbbm{1}_{\widetilde{M}_\pm}$ have zero propagation, they are bounded multipliers of $C_{\max}^\ast(\widetilde M\times\mathbb R)^\Gamma$. Thus we obtain a $\mathbb{Z}_2$-grading
	\begin{align}
	\label{eq grading}
	\mathcal{M}(C_{\max}^\ast(\widetilde M\times \mathbb R)^\Gamma)\cdot\mathbbm{1}_{\widetilde{M}_+}\oplus \mathcal{M}(C_{\max}^\ast(\widetilde M\times \mathbb R)^\Gamma)\cdot\mathbbm{1}_{\widetilde{M}_-},
	\end{align}
on the multiplier algebra $\mathcal{M}(C_{\max}^\ast(\widetilde M\times \mathbb R)^\Gamma)$, along with induced gradings on matrix algebras over it.

Define the subsets
\begin{align}
\label{eq subsets}
  A &= \widetilde V\backslash B_{\frac{L}{3}}(\widetilde{\partial}_-\cup\widetilde{\partial}_+),\nonumber\\
  Z &= (\widetilde M\times\mathbb{R})\backslash A.
\end{align}

For any subset $E\subseteq\widetilde{M}\times\mathbb{R}$, let $C^*_{\max}(E,\widetilde{M}\times\mathbb{R})^\Gamma$ denote the norm closure, in $C^*_{\max}(\widetilde{M}\times\mathbb{R})^\Gamma$, of the $*$-subalgebra of all locally compact, finite propagation operators on $L^2(\mathcal{S}_{\widetilde{M}\times\mathbb{R}})$ whose support is contained in $B_R(E)\times B_R(E)$ for some $R$ (where $R$ may depend on the operator). There is a short exact sequence
	\begin{multline}
	0\to C^*_{\max}(\widetilde M_0,\widetilde{M}\times\mathbb{R})^\Gamma\to C^*_{\max}(\widetilde M_+,\widetilde{M}\times\mathbb{R})^\Gamma\\\oplus C^*_{\max}(\widetilde M_-,\widetilde{M}\times\mathbb{R})^\Gamma\to C^*_{\max}(\widetilde{M}\times\mathbb{R})^\Gamma\to 0,
	\end{multline}
	which induces a coarse Mayer-Vietoris sequence with connecting map
	\[  \partial_{MV}\colon K_n(C_{\max}^\ast(\widetilde M\times \mathbb R)^\Gamma) \to K_{n+1}(C^*_{\max}(\widetilde M_0,\widetilde{M}\times\mathbb{R})^\Gamma)\cong K_{n+1}(C^*_{\max}(\Gamma)), \]
	where we have used the fact that
	$$C^*_{\max}(\widetilde M_0,\widetilde{M}\times\mathbb{R})^\Gamma\cong C^*_{\max}(\widetilde M_0)^\Gamma,$$
	which is Morita equivalent to $C^*_{\textnormal{max}}(\Gamma)$. It follows from Roe's partitioned manifold index theorem in the maximal setting that
	\begin{equation}
	\label{eq Roe}
	\partial_{MV}(\Ind_{\Gamma,\textnormal{max}}(D_{\widetilde{M}\times\mathbb R})) = \Ind_{\Gamma,\textnormal{max}}(D_{\widetilde{M}_0}),
	\end{equation}
	where $D_{\widetilde{M}_0}$ is the induced Dirac operator on $\widetilde{M}_0$. 
	
	We now provide estimates of the constant $C$ independently in the cases when $n$ is even and odd.\\*[5mm]
	\textbf{Case of even $n$.} The left-hand side of \eqref{eq Roe} can be represented explicitly as follows. 
	Given a normalizing function $\chi$, let $p_{\chi}=p_{\chi}(D_{\widetilde M\times\mathbb{R}})$ be as in \eqref{eq pchi}, and let
	\begin{equation}
	\label{eq restriction}
 	p_{\chi+}\coloneqq p_\chi\cdot
 	\begin{pmatrix}\mathbbm{1}_{\widetilde{M}_+}&0\\0&\mathbbm{1}_{\widetilde{M}_+}\end{pmatrix},
 	\end{equation}
 	be its restriction to the non-negative half of $\widetilde M\times\mathbb{R}$. One verifies that $p_{\chi+}$ is an idempotent in $M_2(\mathcal{M}(C^*_{\max}(\widetilde{M}\times\mathbb{R})^\Gamma))$ modulo $M_2(C^*_{\max}(\widetilde M_0,\widetilde{M}\times\mathbb{R})^\Gamma)$, hence
	$$e^{2\pi ip_{\chi+}}\in M_2((C^*_{\max}(\widetilde M_0,\widetilde{M}\times\mathbb{R})^\Gamma)^+).$$
	We then have
	\begin{equation}
	\label{eq representative}
	\partial_{MV}(\Ind_{\Gamma,\textnormal{max}}(D_{\widetilde M\times \mathbb R}))= [e^{2\pi ip_{\chi+}}].
	\end{equation}
	
	We will show that if the band width $L$ is large enough compared to the propagation of $p_\chi$, then $e^{2\pi ip_{\chi+}}$ is homotopic to $1$ through invertibles in $M_2(C^*_{\max}(\widetilde M_0,\widetilde{M}\times\mathbb{R})^\Gamma)$, and hence $\Ind_{\Gamma,\textnormal{max}}(D_{\widetilde{M}_0})$ vanishes in $K_{1}(C_{\max}^\ast( \Gamma))$ by \eqref{eq Roe}.
Cobordism invariance of the index then implies that $\Ind_{\Gamma,\textnormal{max}}(D_{\widetilde M})=0$.

	To give a numerical estimate of how large $L$ needs to be, we need to make a choice of normalizing function. We will work with
	\begin{equation}
	\label{eq chiL even}
	\chi=\chi_L(s)\coloneqq h_1\left(\frac{L}{120\pi}s\right),
	\end{equation}
	where $h_1$ was defined in \eqref{eq h1}. Let $p_{\chi_L}$ and $A_{\chi_L}$ be defined as in \eqref{eq pchi} and \eqref{eq Aj graded} with respect to $\chi_L$. Since $\widehat{h}$ has support in $[-8\pi,8\pi]$, we have:
	\begin{itemize}
	\item $\supp\widehat{\chi}_L\subseteq[-\frac{L}{15},\frac{L}{15}]$, and $\chi_L(D_{\widetilde M\times\mathbb{R}})$ has propagation at most $\frac{L}{15}$;
	\item $p_{\chi_L}$ has propagation at most $\frac{L}{3}$.
	\end{itemize}
 
	Define
	$$q\coloneqq p_{\chi_L}\cdot\begin{pmatrix}\mathbbm{1}_Z&0\\0&\mathbbm{1}_Z\end{pmatrix}+\begin{pmatrix}0&0\\0&\mathbbm{1}_A\end{pmatrix},\qquad q_+\coloneqq q\cdot\begin{pmatrix}\mathbbm{1}_{\widetilde{M}_+}&0\\0&\mathbbm{1}_{\widetilde{M}_+}\end{pmatrix}.$$
Let $p_{\chi_L+}$ be defined as in \eqref{eq restriction}. We claim that if 
\begin{equation}
\label{eq epsilon}
\norm{q-p_{\chi_L}}<\varepsilon
\end{equation}
for some $\varepsilon>0$, then
	\begin{align}
	\label{eq q+ squared}
	\norm{q_+-p_{\chi_L+}}&<\varepsilon,\qquad\norm{q_+^2-q_+}\leq(4\norm{p_{\chi_L}}+1+\varepsilon)\varepsilon,
	\end{align}
where the norms are taken in $M_2(C^*_\textnormal{max}(\widetilde{M}\times\mathbb R)^{\Gamma})$. 

The first inequality follows by observing that 
$$q_+-p_{\chi_L+}=(q-p_{\chi_L})\cdot\begin{pmatrix}\mathbbm{1}_{\widetilde{M}_+}&0\\0&\mathbbm{1}_{\widetilde{M}_+}\end{pmatrix},$$ 
together with the fact that $\Big(\begin{smallmatrix}\mathbbm{1}_{\widetilde{M}_+}&0\\0&\mathbbm{1}_{\widetilde{M}_+}\end{smallmatrix}\Big)$ is a projection in $M_2(\mathcal{M}(C_{\max}^\ast(\widetilde M\times \mathbb R)^\Gamma))$.

To see the second inequality, note first that since $p_{\chi_L}$ is an idempotent, we have
	\begin{align}
	\label{eq q squared}
	\norm{q^2-q}&=\norm{q^2-p_{\chi_L}^2+p_{\chi_L}^2-p_{\chi_L}+p_{\chi_L}-q}\nonumber\\
	&=\norm{(q+p_{\chi_L})(q-p_{\chi_L})-p_{\chi_L}q+qp_{\chi_L}+p_{\chi_L}^2-p_{\chi_L}+p_{\chi_L}-q}\nonumber\\
	&=\norm{(q+p_{\chi_L})(q-p_{\chi_L})-p_{\chi_L}(q-p_{\chi_L})+(q-p_{\chi_L})p_{\chi_L}+p_{\chi_L}-q}\nonumber\\
	&\leq\norm{q+p_{\chi_L}}\norm{q-p_{\chi_L}}+\norm{p_{\chi_L}}\norm{q-p_{\chi_L}}+\norm{q-p_{\chi_L}}\norm{p_{\chi_L}}+\norm{p_{\chi_L}-q}\nonumber\\
	&\leq(4\norm{p_{\chi_L}}+1+\varepsilon)\varepsilon.
	\end{align}
	
Define the subsets
\begin{align*}
  A_{\pm} &= A\cap\widetilde{M}_\pm,\\
  Z_{\pm} &= Z\cap\widetilde{M}_\pm
\end{align*}
and restrictions
\begin{align*}
  q_{A_{\pm}} &= q\cdot\begin{pmatrix}\mathbbm{1}_{A_{\pm}}&0\\0&\mathbbm{1}_{A_{\pm}}\end{pmatrix},\\
  q_{Z_{\pm}}&=q\cdot\begin{pmatrix}\mathbbm{1}_{Z_{\pm}}&0\\0&\mathbbm{1}_{Z_{\pm}}\end{pmatrix}
\end{align*}
of $q$ to these subsets. 

One computes that, with respect the $\mathbb{Z}_2$-grading on $M_2(\mathcal{M}(C_{\max}^\ast(\widetilde M\times \mathbb R)^\Gamma))$ induced by the grading \eqref{eq grading}, we have
$$q^2-q=\begin{pmatrix}q_{Z_+}^2-q_{Z_+}+q_{A_+} q_{Z_+} & q_{A_+}q_{Z_-}\\q_{A_-}q_{Z_+} & q_{Z_-}^2-q_{Z_-}+q_{A_-}q_{Z_-}\end{pmatrix},$$
where we have made use of the fact that 
$$q_{Z_+}q_{Z_-}=q_{Z_-}q_{Z_+}=0$$ 
because the regions $Z_+$ and $Z_-$ are separated by a distance of $\frac{L}{3}\geq\textnormal{prop}(p_{\chi_L})$. Meanwhile, since
$$q_+^2-q_+=q_{Z_+}^2-q_{Z_+}+q_{A_+}q_{Z_+},$$
we have 
\begin{equation*}
\norm{q_+^2-q_+}\leq\norm{q^2-q}.
\end{equation*}
This proves the second inequality in $\eqref{eq q+ squared}$.
	
	From this it follows that
	\begin{align}
	\label{eq pchiL+ squared}
	\norm{p_{\chi_L+}^2-p_{\chi_L+}}&=\norm{p_{\chi_L+}^2-q_+^2+q_+^2-q_++q_+-p_{\chi_L+}}\nonumber\\
	&\begin{multlined}=\norm{(p_{\chi_L+}+q_+)(p_{\chi_L+}-q_+)-q_+p_{\chi_L+}+p_{\chi_L+}q_+\\+q_+^2-q_+^2+q_+^2-q_++q_+-p_{\chi_L+}}\end{multlined}\nonumber\\
	&\begin{multlined}\leq\norm{p_{\chi_L+}+q_+}\norm{p_{\chi_L+}-q_+}+\norm{q_+}\norm{p_{\chi_L+}-q_+}\\\qquad\quad+\norm{p_{\chi_L+}-q_+}\norm{q_+}+\norm{q_+^2-q_+}+\norm{q_+-p_{\chi_L+}}\end{multlined}\nonumber\\
	&\begin{multlined}\leq(2\norm{p_{\chi_L+}}+\varepsilon+\norm{p_{\chi_L+}}+\varepsilon+\norm{p_{\chi_L+}}\\+\varepsilon+4\norm{p_{\chi_L+}}+1+\varepsilon+1)\varepsilon\end{multlined}\nonumber\\
	&\leq(8\norm{p_{\chi_L+}}+4\varepsilon+2)\varepsilon\nonumber\\
	&\leq(8\norm{p_{\chi_L}}+4\varepsilon+2)\varepsilon.
	\end{align}
For each $k\geq 2$, we have
\begin{align*}
\|p_{\chi_L+}^k-p_{\chi_L+}\| &= \|p_{\chi_L+}^{k-2}(p_{\chi_L+}^2-p_{\chi_L+})+p_{\chi_L+}^{k-1}-p_{\chi_L+} \|\\
&\leq\norm{p_{\chi_L+}}^{k-2}\cdot\norm{p_{\chi_L+}^2-p_{\chi_L+}}+\|p_{\chi_L+}^{k-1}-p_{\chi_L+}\|\\
&\leq\norm{p_{\chi_L}}^{k-2}\cdot\norm{p_{\chi_L+}^2-p_{\chi_L+}}+\|p_{\chi_L+}^{k-1}-p_{\chi_L+}\|,
\end{align*}
thus by recursion,
\begin{align}
\label{eq kth}
\|p_{\chi_L+}^k-p_{\chi_L+}\| &\leq \|p_{\chi_L+}^2-p_{\chi_L+}\|\cdot\sum_{k=0}^{n-1} \norm{p_{\chi_L}}^{n-2-k}\nonumber\\
&\leq \|p_{\chi_L+}^2-p_{\chi_L+}\|\cdot C_{\chi_L}^{-2} \dfrac{1-C_{\chi_L}^{n}}{1-C_{\chi_L}}
\end{align}
for any $C_{\chi_L}\geq\norm{p_{\chi_L}}$, $C_{\chi_L}\neq 1$.
Noting that
$$e^{2\pi ip_{\chi_L+}}-1 =\sum_{k=1}^\infty \frac{(2\pi i)^k p_{\chi_L+}^k}{k!}= \sum_{k=1}^\infty \frac{(2\pi i)^k(p_{\chi_L+}^k-p_{\chi_L+})}{k!},$$
and applying the bounds \eqref{eq pchiL+ squared} and \eqref{eq kth}, gives
\begin{align}
\label{eq exponential bound}
\|e^{2\pi ip_{\chi_L+}}-1\|\leq(8 C_{\chi_L}+4\varepsilon+2)\varepsilon\cdot\frac{e^{2\pi C_{\chi_L}}-e^{2\pi}}{C_{\chi_L}^2(C_{\chi_L}-1)}.
\end{align}
This quantity vanishes as $\varepsilon\to 0$, whence $e^{2\pi ip_{\chi_L+}}$ is homotopic to $1$ through invertibles in $M_2(C^*_{\max}(\widetilde M_0,\widetilde{M}\times\mathbb{R})^\Gamma)$.
 
	We now claim that for any choice of $\varepsilon>0$, \eqref{eq epsilon} is satisfied for all $L$ sufficiently large. To see this, recall that by the Lichnerowicz formula \cite{Lichnerowicz}, 
	$$D_{\widetilde M\times\mathbb{R}}^2=\nabla^*\nabla+\frac{\kappa}{4},$$ 
	where $\nabla$ is the connection on $\mathcal{S}_{\widetilde M\times\mathbb{R}}$ induced by the Levi-Civita connection associated to the metric $g$ on $\widetilde M\times\mathbb{R}$. Let $A\subseteq\widetilde M\times\mathbb{R}$ be defined as in \eqref{eq subsets}, so that $\kappa\geq\sigma$ on $A$. Then as observed by Friedrich \cite{Friedrich}, we have that for all $s\in L^2(\mathcal{S}_{\widetilde{M}\times\mathbb{R}})$ supported in $A$,
	\begin{align*}
	\langle D_{\widetilde M\times\mathbb{R}}	^2 s,s\rangle&\geq\langle\nabla s,\nabla s\rangle+\frac{\sigma}{4}\langle s,s\rangle\\
	&\geq\frac{1}{n}\langle D_{\widetilde M\times\mathbb{R}}s,D_{\widetilde M\times\mathbb{R}}s\rangle+\frac{\sigma}{4}\langle s,s\rangle,
	\end{align*}
	where $\langle\,\cdot\,,\cdot\,\rangle$ is the inner product on $L^2(\mathcal{S}_{\widetilde{M}\times\mathbb{R}})$. This implies
	$$D^2_{\widetilde M\times\mathbb{R}}\geq\frac{n\sigma}{4(n-1)}$$ 
	on the set $A$. Now observe that
	\begin{equation}
	\label{eq middle estimate}
	\norm{q-p_{\chi_L}}=\left\|p_{\chi_L}\cdot\begin{pmatrix}\mathbbm{1}_A&0\\0&\mathbbm{1}_A\end{pmatrix}-\begin{pmatrix}0&0\\0&\mathbbm{1}_A\end{pmatrix}\right\|=\left\|A_{\chi_L}\cdot\begin{pmatrix}\mathbbm{1}_A&0\\0&\mathbbm{1}_A\end{pmatrix}\right\|.
	\end{equation}
	By the formula for $A_{\chi_L}$ in \eqref{eq pchi} and \eqref{eq Aj graded}, we see that \eqref{eq middle estimate} is bounded above by
	\begin{align}
	\label{eq max}
	&\qquad\max\Big\{\norm{(1-\chi_L(D_{\widetilde M\times\mathbb{R}})^2)^2\cdot\mathbbm{1}_A}+\norm{\chi_L(D_{\widetilde M\times\mathbb{R}})(1-\chi_L(D_{\widetilde M\times\mathbb{R}})^2)\cdot\mathbbm{1}_A},\nonumber\\
	&\norm{\chi_L(D_{\widetilde M\times\mathbb{R}})(2-\chi_L(D_{\widetilde M\times\mathbb{R}})^2)(1-\chi_L(D_{\widetilde M\times\mathbb{R}})^2)\cdot\mathbbm{1}_A}+\norm{(1-\chi_L(D_{\widetilde M\times\mathbb{R}})^2)^2\cdot\mathbbm{1}_A}\Big\}\nonumber\\
	&\quad\qquad\leq\max\Big\{\norm{(1-\chi_L(D_{\widetilde M\times\mathbb{R}})^2)^2\cdot\mathbbm{1}_A}+\norm{(1-\chi_L(D_{\widetilde M\times\mathbb{R}})^2)\cdot\mathbbm{1}_A},\nonumber\\
	&\quad\norm{(2-\chi_L(D_{\widetilde M\times\mathbb{R}})^2)(1-\chi_L(D_{\widetilde M\times\mathbb{R}})^2)\cdot\mathbbm{1}_A}+\norm{(1-\chi_L(D_{\widetilde M\times\mathbb{R}})^2)^2\cdot\mathbbm{1}_A}\Big\}.
	\end{align}
	Define, for any $f\in C_b(\mathbb{R})$ and $s\geq 0$,
	\begin{align}
	\label{eq b}
	\qquad b_{f}(s)\coloneqq\sup\Bigg\{&\max\Big\{(1-f(t)^2)^2+(1-f(t)^2),\nonumber\\
	&|(2-f(t)^2)(1-f(t)^2)|+(1-f(t)^2)^2\Big\}\,\,\colon\,|t|\geq s
	 \Bigg\}.
	\end{align}
	Since the scalar curvature is bounded below by $\sigma$ on $A$, we can apply Lemma $\ref{lm:fp}$ (i) with $W=\widetilde M\times\mathbb{R}$, $D=D_{\widetilde M\times\mathbb{R}}$, and $Z$ as in \eqref{eq subsets}, to see that \eqref{eq max} is bounded above by
	$$b_{\chi_L}\left(\frac{1}{2}\sqrt{\frac{n\sigma}{n-1}}\right)=b_{h_1}\left(\frac{L}{240\pi}\sqrt{\frac{n\sigma}{n-1}}\right),$$
	where we have used the definition of $\chi_L$ in \eqref{eq chiL even}.
%
	Since $\chi_L^2(t)-1\to 0$ as $t\to\infty$, this quantity tends to $0$ in the limit $L\to\infty$. Thus \eqref{eq middle estimate} holds for all sufficiently large $L$. 
	Together with \eqref{eq exponential bound}, this proves the existence of \emph{some} finite constant $C$ in the statement of the theorem.
	
We now estimate $C$ numerically. Note that $\norm{p_{\chi_L}}$ is bounded above by
	\begin{align*}
	\label{eq max}
	&\qquad\max\Big\{\norm{(1-\chi_L(D_{\widetilde M\times\mathbb{R}})^2)^2}+\norm{\chi_L(D_{\widetilde M\times\mathbb{R}})(1-\chi_L(D_{\widetilde M\times\mathbb{R}})^2)},\nonumber\\
	&\norm{\chi_L(D_{\widetilde M\times\mathbb{R}})(2-\chi_L(D_{\widetilde M\times\mathbb{R}})^2)(1-\chi_L(D_{\widetilde M\times\mathbb{R}})^2)}+\norm{(1-\chi_L(D_{\widetilde M\times\mathbb{R}})^2)^2}\Big\}\\
	&\quad\qquad\leq\sup\Big\{\max\big\{(1-f(t)^2)^2+|f(t)(1-f(t)^2)|,\\
	&\qquad\qquad\qquad\qquad\qquad|f(t)(2-f(t)^2)(1-f(t)^2)|+(1-f(t)^2)^2\big\}\Big\},
	\end{align*}
which we numerically verify is strictly less than $1.29$. Using this, the bounds \eqref{eq pchiL+ squared} and \eqref{eq exponential bound} imply that
$$\norm{p_{\chi_L+}^2-p_{\chi_L+}}\leq(12.32+4\varepsilon)\varepsilon,$$
$$\norm{e^{2\pi i p_{\chi_L+}}-1}\leq\frac{e^{2\pi*1.29}-e^{2\pi}}{1.29^3-1.29^2}(12.32+4\varepsilon)\varepsilon.$$
The latter quantity is strictly less than $1$ if $\varepsilon<1.4108\times 10^{-5}.$
Thus we wish to find $C$ such that
	\begin{equation}
	\label{eq bound}
	b_{h_1}\left(\frac{L}{240\pi}\sqrt{\frac{n\sigma}{n-1}}\right)\leq 1.4108\times 10^{-5}
	\end{equation}
whenever $L > C\sqrt{\frac{n-1}{n}}\frac{1}{\sqrt{\sigma}}$, or, equivalently, such that
	\begin{equation}
	\label{eq bound 2}
	b_{h_1}\left(s\right)\leq 1.4108\times 10^{-5}
	\end{equation}
for all $s>\frac{C}{240\pi}$. One verifies numerically that \eqref{eq bound 2} holds whenever $s>0.7888$, which gives a constant $C$ of at most $190\pi$ when $n$ is even.
\\*[5mm]
	\textbf{Case of odd $n$.} In this case, the left-hand side of \eqref{eq Roe} can be written explicitly as follows. Let $U$ be any representative of the higher index of $\Ind_{\Gamma,\textnormal{max}}(D_{\widetilde M\times \mathbb R})$. Let
	\begin{equation}
	\label{eq restriction}
 	U_+\coloneqq U\cdot\mathbbm{1}_{\widetilde{M}_+}.
 	\end{equation}
 	Then $U_+$ is invertible modulo $C^*_{\max}(\widetilde M_0,\widetilde{M}\times\mathbb{R})^\Gamma$. The image $\partial_{MV}(\Ind_{\Gamma,\textnormal{max}}(D_{\widetilde M\times \mathbb R}))$ can be represented by a difference of two idempotents, as given by the formula in Definition \ref{def connectingmaps}, and which vanishes if $U_+$ is invertible in $C^*_{\max}(\widetilde{M}\times\mathbb{R})^\Gamma$.

	We now show that when the width $L$ is sufficiently large, $U_+$ can be approximated by an invertible element. Equation \eqref{eq Roe} then implies that
\begin{align}
\label{eq second last step odd}
\Ind_{\Gamma,\textnormal{max}}(D_{\widetilde M_0}) 
&= 0\in  K_{0}(C_{\max}^\ast( \Gamma)),
\end{align}
and we conclude, by cobordism invariance of the index, that $\Ind_{\Gamma,\textnormal{max}}(D_{\widetilde M})=0$. 

To estimate the constant $C$, we will use the normalizing function
\begin{equation*}
	\label{eq chiL odd}
	\chi_L(s)\coloneqq h_2\left(\frac{L}{306\pi}s\right),
\end{equation*}
where $h_2$ was defined in \eqref{eq h2}. Notice that $\chi_L(D_{\widetilde M\times \mathbb R})$ has propagation at most $\frac{L}{102}$. Let $A_{\chi_L}(D_{\widetilde M\times \mathbb R})$ be the unitary representing the index of $D_{\widetilde M\times\mathbb R}$ defined using $\chi_L$, and let the subsets $A$ and $Z$ be as in \eqref{eq subsets}. Suppose as before that $\kappa\geq\sigma$ on $\widetilde{V}$.

Define
\begin{align*}
S_{\chi_L}&\coloneqq\tfrac{\chi_L(D_{\widetilde{M}\times\mathbb{R}})+1}{2},\\
u_{\chi_L}&\coloneqq g_{17}(S_{\chi_L}),\\
v_{\chi_L}&\coloneqq g_{17}(-S_{\chi_L}),
\end{align*}
where the polynomial $g_{17}$ was defined in \eqref{eq gn}. Then $u_{\chi_L}$ and $v_{\chi_L}$ have propagations at most $\frac{L}{6}$. From the identity
$$\Big|e^z-\sum_{j=0}^d\frac{z^j}{j!}\Big|\leq\max\{1,e^{\operatorname{Re}(z)}\}\cdot\frac{|z|^d}{d!}$$
for all $z\in\mathbb{C}$, we have\\[-15pt]
\begin{align*}
\delta&\coloneqq\max\left\{\norm{A_{\chi_L}(D_{\widetilde M\times \mathbb R})-u_{\chi_L}},\norm{A_{\chi_L}(D_{\widetilde M\times \mathbb R})^{-1}-v_{\chi_L}}\right\}\\[10pt]
&\leq\bigg\|\sum_{k=0}^{17}\frac{(2\pi iS_{\chi_L})^k}{k!}\bigg\| + \bigg\|\sum_{k=1}^{17}\frac{(2\pi i)^k}{k!}S_{\chi_L}^2\bigg\|\\[10pt]
&\leq\frac{(2\pi)^{17}(\norm{S_{\chi_L}}^{17}+\norm{S_{\chi_L}}^2)}{17!},\\[-15pt]
\end{align*}
where the norms are taken in $(C^*_\textnormal{max}(M)^{\Gamma})^+$. 
One finds numerically that $\delta<0.209$. 

We claim that for any $0<\varepsilon<1$, we have
	\begin{align}
	\label{eq middle estimate odd}
	\max\big\{\left\|(u_{\chi_L}-1)\cdot\mathbbm{1}_A\right\|,\left\|(v_{\chi_L}-1)\cdot\mathbbm{1}_A\right\|\big\}\leq\varepsilon
	\end{align}
for $L$ sufficiently large. Indeed, by applying Lemma $\ref{lm:fp}$ (ii) with $W=\widetilde M\times\mathbb{R}$, $D=D_{\widetilde M\times\mathbb{R}}$, and the above choice of $Z$, we see that
	\begin{equation}
	\label{eq bound odd}
	\norm{(u_{\chi_L}-1)\cdot\mathbbm{1}_A}\leq 2\cdot \sup\left\{|u_{\chi_L}(t)-1|\colon|t|\geq\frac{1}{2}\sqrt{\frac{n\sigma}{n-1}}\right\},
	\end{equation}
	where $u_{\chi_L}(t)$ is the function $t\mapsto g_{17}\left(\frac{1}{2}(\chi_L(t)+1)\right)$.
	Note that $g_{17}(x)-1$ contains a factor of $x^2-x$ 	
	and that 
	$$\left(\frac{1}{2}(\chi_L(t)+1)\right)^2-\frac{1}{2}(\chi_L(t)+1)=\frac{1}{4}(\chi_L(t)^2-1).$$ 
	This, together with the fact that $\chi_L(t)^2-1\to 0$ as $t\to\pm\infty$, implies that \eqref{eq bound odd} approaches $0$ as $L\to\infty$. The same applies to $\left\|(v_{\chi_L}-1)\cdot\mathbbm{1}_A\right\|$. Thus \eqref{eq middle estimate odd} holds when $L$ is sufficiently large.
	
	Suppose \eqref{eq middle estimate odd} holds for some $\varepsilon$ and $L$. Define
	\begin{align*}
	u&\coloneqq u_{\chi_L}\cdot\mathbbm{1}_{Z\backslash A}+\mathbbm{1}_A,\\
	v&\coloneqq v_{\chi_L}\cdot\mathbbm{1}_{Z\backslash A}+\mathbbm{1}_A.
	\end{align*}
	Then $\norm{u-u_{\chi_L}}\leq\varepsilon$ and $\norm{v-v_{\chi_L}}\leq\varepsilon$, whence
	\begin{align*}
	&\begin{multlined}\norm{uv-1}=\norm{uv-uv_{\chi_L}+uv_{\chi_L}-uA_{\chi_L}(D_{\widetilde M\times \mathbb R})^{-1}+uA_{\chi_L}(D_{\widetilde M\times \mathbb R})^{-1}\\-u_{\chi_L}A_{\chi_L}(D_{\widetilde M\times \mathbb R})^{-1}+A_{\chi_L}(D_{\widetilde M\times \mathbb R})A_{\chi_L}(D_{\widetilde M\times \mathbb R})^{-1}-1}\end{multlined}\nonumber\\
	&\qquad\qquad\leq\norm{u}\varepsilon+\norm{u}\delta+\varepsilon\nonumber\\
	&\qquad\qquad=\norm{u}(\delta+\varepsilon)+\varepsilon\nonumber\\
	&\qquad\qquad\leq(1+\delta)(\delta+\varepsilon)+\varepsilon.
	\end{align*}
	and similarly for $\norm{vu-1}$.
	We also have
\begin{align*}
\norm{A_{\chi_L}(D_{\widetilde M\times \mathbb R})-u}\leq\norm{A_{\chi_L}(D_{\widetilde M\times \mathbb R})-u_{\chi_L}}+\norm{u-u_{\chi_L}}\leq\delta+\varepsilon,
\end{align*}
and similarly for $\norm{A_{\chi_L}(D_{\widetilde M\times \mathbb R})^{-1}-v}$.
	Write 
	\begin{align*}
	u_+&=u\cdot\mathbbm{1}_{\widetilde{M}_+},\qquad v_+=v\cdot\mathbbm{1}_{\widetilde{M}_+}.
	\end{align*}
An argument analogous to that used to derive \eqref{eq q+ squared} in the even case shows that
	\begin{align}
	\label{eq v+}
	\max\{\norm{u_+v_+-1},\norm{v_+u_+-1}\}&\leq(1+\delta)(\delta+\varepsilon)+\varepsilon.
	\end{align}
	One verifies that if $\varepsilon\leq 0.338$, then both
	$(1+\delta)(\delta+\varepsilon)+\varepsilon$ and $\delta+\varepsilon$ are strictly less than $1$. This implies that	
$$\Ind_{\Gamma,\textnormal{max}}(D_{\widetilde{M}\times\mathbb{R}})=[A_\chi(D_{\widetilde{M}\times\mathbb{R}})]=[u]\in K_1(C^*_\textnormal{max}(\widetilde{M}\times\mathbb{R})^{\Gamma}),$$
	and that $u_+$ is invertible. 
	

Thus we wish to find $C$ such that 
	\begin{equation}
	\label{eq bound}
	\sup\left\{|u_{\chi_L}(t)-1|\colon|t|\geq\frac{1}{2}\sqrt{\frac{n\sigma}{n-1}}\right\}\leq 0.169
	\end{equation}
whenever $L>C\sqrt{\frac{n-1}{n}}\frac{1}{\sqrt{\sigma}}.$
By definition of $\chi_L$, \eqref{eq bound} is equivalent to
\begin{equation*}
\sup\left\{|u_{h_2}(t)-1|\colon|t|\geq\frac{L}{612\pi}\sqrt{\frac{n\sigma}{n-1}}\right\}\leq 0.169.
\end{equation*}
It can be verified numerically that $|u_{h_2}(t)-1|$
is bounded above by $0.169$ for all $t$ satisfying $|t|\geq 0.535$. This gives a constant $C$ of at most $328\pi$ when $n$ is odd.
\end{proof}

\hfill\vskip 0.3in
\bibliographystyle{plain}
\bibliography{mybib}
\end{document}